\definecolor{vert}{rgb}{0,0.6,0}
\newlist{thmlist}{enumerate}{1}
\setlist[thmlist]{label=(\roman{thmlisti}),noitemsep}
\Crefname{thm}{Theorem}{Theorems}
\Crefname{lem}{Lemma}{Lemmas}
\Crefname{listthm}{Theorem}{Theorems}
\Crefname{listlem}{Lemma}{Lemmas}
\theoremstyle{plain}
\newtheorem{thm}[equation]{Theorem}
\theoremstyle{definition}
\newtheorem{defn}[equation]{Definition}
\newtheorem{cor}[equation]{Corollary}
\newtheorem{lem}[equation]{Lemma}
\newtheorem{prop}[equation]{Proposition}
\newtheorem{remark}[equation]{Remark}
\newtheorem{example}[equation]{Example}
\numberwithin{equation}{section}
\newcommand{\N}{\mathbb{N}}
\newcommand{\R}{\mathbb{R}}
\newcommand{\al}{\alpha}
\newcommand{\vep}{\varepsilon}
\newcommand{\supp}{{\rm supp}\,}
\newcommand{\loc}{{\rm loc}\,}
\newcommand{\osc}{\diam\,}
\newcommand{\dist}{{\rm dist}\,}
\DeclareMathOperator{\diam}{diam}
\DeclareMathOperator{\Lip}{Lip}
\begin{document}

\title[Absolutely continuous mappings
on doubling metric measure spaces]{Absolutely continuous mappings\\
	on doubling metric measure spaces
}

\author{Panu Lahti}
\address{Panu Lahti,  Academy of Mathematics and Systems Science, Chinese Academy of Sciences,
	Beijing 100190, PR China, {\tt panulahti@amss.ac.cn}}
\author{Xiaodan Zhou}
\address{Xiaodan Zhou, Analysis on Metric Spaces Unit, Okinawa Institute of Science and Technology Graduate University, 1919-1, Onna-son, 
Okinawa 904-0495, Japan, {\tt xiaodan.zhou@oist.jp}}

\subjclass[2020]{26B30, 30L10, 46E36}
\keywords{absolute continuity, bounded variation, doubling measure, Lusin property, Sobolev mapping,
	 quasiconformal mapping}

\date{\today}

\maketitle

\begin{abstract}
Following Mal\'y's \cite{Maly} definition of absolutely continuous functions of several variables,
we consider $Q$-absolutely continuous mappings $f\colon X\to V$ between a doubling
metric measure space $X$ and a
Banach space $V$.
The relation between these mappings and Sobolev mappings
$f\in N^{1,p}(X;V)$ for $p\ge Q$ is investigated. In particular, a locally $Q$-absolutely continuous
mapping  on an Ahlfors $Q$-regular space
is a continuous mapping in $N^{1,Q}_\loc(X;V)$, as well as differentiable almost everywhere in
terms of Cheeger derivatives provided $V$ satisfies the Radon-Nikodym property. Conversely, though a continuous Sobolev mapping
$f\in N^{1,Q}_\loc(X;V)$ is generally not locally $Q$-absolutely continuous, this implication holds
if $f$ is further assumed to be pseudomonotone. It follows that pseudomonotone mappings satisfying a relaxed quasiconformality condition are also
$Q$-absolutely continuous.

\end{abstract}

\section{Introduction}

Let $\Omega\subset \mathbb{R}^n$ be a domain. The definition of absolutely continuous functions on the real
line is generalized by Mal\'y \cite{Maly} to mappings $f\colon \Omega\to \mathbb{R}^d$: $f$ is called $n$-absolutely continuous if for every $\vep>0$ there exists $\delta>0$ such that for each finite
 family of disjoint balls $\{B_i\}$ in $\Omega$ we have
\[
\sum_i \mathcal{H}^n(B_i)<\delta\  \Rightarrow\  \sum_i (\diam(f(B_i)))^n<\vep.
\]

Properties of $n$-absolutely continuous mappings including weak differentiability, area and coarea
formulas, and the relation with Sobolev mappings $W^{1,p}(\Omega, \mathbb{R}^d)$ for $p\ge n$ are studied
in \cite{Maly}. We say that the mapping $f\colon \Omega\to \mathbb{R}^d$ is $n$-absolutely continuous in measure or
satisfies the Lusin property if $\mathcal{H}^n(f(E))=0$ whenever $\mathcal{H}^n(E)=0$. One can show that $f$
is absolutely continuous in measure provided $f$ is $n$-absolutely continuous.

This notion of absolutely continuous functions gives a unified approach to study the above mentioned
properties implied by conditions including monotonicity, finite dilatation and higher integrability of
the gradient. There are several works generalizing this definition in Euclidean spaces \cite{Hencl, Bon},
and extensions of this notion in metric spaces are considered by Marola and Ziemer \cite{MaZi}.
Using a rather different definition, absolutely continuous functions in a class of one-dimensional
metric spaces $X$ have been studied by the second author in \cite{Zhou}.

Consider a mapping $f\colon X\to Y$ between metric measure spaces $(X,d,\mu)$ and $(Y,d_Y,\nu)$. We can
define the class of absolutely continuous mappings by replacing the dimension $n$ with an appropriate dimension
of $X$ and replacing $\mathcal{H}^n$ with $\mu$. We say a measure $\mu$ is doubling if there exists $C>1$
such that $\mu(B_{2r}(x))\le C\mu(B_r(x))$ for all $x\in X$ and $r>0$.  For every doubling metric
space, we call the exponent  $Q\ge 1$ in the lower mass bound defined in \eqref{lmb}
the homogeneous dimension of $X$.
We define the class of $Q$-absolutely continuous mappings on $X$ and denote the collection as $AC^Q(X;Y)$.
A $Q$-absolutely continuous mapping $f\colon X\to Y$ is $Q$-absolutely continuous in measure, that is, $\mathcal{H}^Q(f(E))=0$ if $\mu(E)=0$. We can extend the definition of $Q$-absolutely continuous mappings
to a class of mappings with bounded $Q$-variation and call it $BV^Q(X;Y)$. One can verify that
$AC_{\loc}^Q(X;Y)\subset BV_{\loc}^Q(X;Y)$,
see Remark \ref{ACtoBV}.

Sobolev mappings $f\colon X\to Y$ between metric spaces are introduced by Heinonen, Koskela,
Shanmugalingam and Tyson \cite{HKST1}.
The Sobolev spaces are denoted by $N^{1,p}(\Omega;Y)$, and the corresponding Dirichlet spaces by
$D^p(\Omega;Y)$.
If the space $X$ is Ahlfors $Q$-regular, we obtain the following result. 

\begin{thm}\label{BVisSobolev}
Let $X$ be a complete Ahlfors $Q$-regular space with $Q>1$. If
$\Omega\subset X$ is open and bounded and
$f\in BV^Q(\Omega;Y)$, then
by choosing a suitable $\mu$-representative of $f$, we have $f\in D^{Q}(\Omega;Y)$.
\end{thm}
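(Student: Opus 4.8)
The plan is to manufacture a $Q$-integrable upper gradient out of the $Q$-variation and then to verify the upper gradient inequality directly along $Q$-almost every curve, since Ahlfors $Q$-regularity by itself does not supply a Poincaré inequality. First I would record the variation as a finite Borel measure $\mu_f$ on $\Omega$: for an open set $U$ one takes $\mu_f(U)=\sup\sum_i(\diam f(B_i))^Q$, the supremum over finite families of pairwise disjoint balls $B_i\subset U$, and extends $\mu_f$ to a Borel measure by the usual metric outer measure construction. The hypothesis $f\in BV^Q(\Omega;Y)$ is exactly the statement $\mu_f(\Omega)<\infty$, and the essential feature I need is the oscillation estimate $(\diam f(B(x,r)))^Q\le \mu_f(B(x,2r))$, obtained by admitting the single ball $B(x,r)$ in the supremum. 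This estimate will drive everything else.

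Since $X$ is Ahlfors $Q$-regular, $\mu$ is doubling, so the Lebesgue--Radon--Nikodym differentiation theorem applies: the density $h(x)=\lim_{r\to0}\mu_f(B(x,r))/\mu(B(x,r))$ exists for $\mu$-a.e.\ $x$, lies in $L^1(\Omega)$, and satisfies $\int_\Omega h\,d\mu\le\mu_f(\Omega)$. Combining this with the oscillation estimate and the two-sided bound $\mu(B(x,r))\simeq r^Q$, I obtain for $\mu$-a.e.\ $x$ a radius $r_x>0$ with $\diam f(B(x,r))\le C\,h(x)^{1/Q}\,r$ for all $0<r<r_x$. This suggests the candidate $g:=C\,h^{1/Q}$, and the exponents line up precisely here: $\int_\Omega g^Q\,d\mu=C^Q\int_\Omega h\,d\mu\le C^Q\mu_f(\Omega)<\infty$, so $g\in L^Q(\Omega)$. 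Because $\diam f(B(x,r))\to0$ as $r\to0$ at $\mu$-a.e.\ point, $f$ agrees $\mu$-a.e.\ with a function that is continuous at these points, and I take this as the suitable $\mu$-representative.

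It remains to verify that $g$ is a $Q$-weak upper gradient of this representative, i.e.\ that $d_Y(f(\gamma(0)),f(\gamma(\ell)))\le\int_\gamma g\,ds$ for $Q$-a.e.\ rectifiable curve $\gamma$. I would discard two curve families of zero $Q$-modulus: the family on which $\int_\gamma g\,ds=\infty$ (null because $g\in L^Q$), and the family of curves spending positive length in the $\mu$-null set $E$ on which $h$ fails to be a finite density point (null because $g_E=\infty\cdot\mathbf 1_E$ is admissible for it while $\int_\Omega g_E^Q\,d\mu=0$). To neutralize the point-dependent radius $r_x$, I decompose $\Omega$ into the sets $\Omega_k=\{x:\,r_x>1/k\}$ and argue scale by scale. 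For a curve avoiding the bad families I subdivide it into short subarcs, cover each subarc (say through the point $x_i$) by a ball $B_i$ of radius $r_i$ comparable to its length, and telescope: $d_Y(f(\gamma(0)),f(\gamma(\ell)))\le\sum_i\diam f(B_i)\le C\sum_i h(x_i)^{1/Q}r_i$, where the Riemann-type sums on the right converge to $\int_\gamma g\,ds$ under refinement. This yields the upper gradient inequality and hence $f\in D^Q(\Omega;Y)$.

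The main obstacle is exactly this last passage from the ball-oscillation bound to a genuine $Q$-weak upper gradient. The estimate $\diam f(B(x,r))\le C\,h(x)^{1/Q}r$ holds only $\mu$-a.e.\ and only below the point-dependent scale $r_x$, so the telescoping must be organized so that the resulting exceptional set contributes a curve family of zero $Q$-modulus and so that the discrete sums genuinely converge to the curve integral of $g$; this is where the modulus estimates and the uniformization over the sets $\Omega_k$ do the real work. It is also the step that genuinely uses Ahlfors $Q$-regularity rather than mere doubling, since the relation $\mu(B(x,r))\simeq r^Q$ is what converts the $Q$-th power appearing in the variation into the correct $L^Q$ bound for $g=C\,h^{1/Q}$.
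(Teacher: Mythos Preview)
Your route is genuinely different from the paper's, and the last step --- passing from the pointwise oscillation bound to a $Q$-weak upper gradient by telescoping and letting ``Riemann-type sums converge to $\int_\gamma g\,ds$'' --- does not go through. From $\diam f(B(x,r))\le g(x)\,r$ for $r<r_x$ at $\mu$-a.e.\ $x$ with $g\in L^Q$, you cannot conclude the upper gradient inequality along $Q$-a.e.\ curve: for a merely Borel $g$ such sums need not approximate the Lebesgue integral, and more fundamentally your density $h$ only records the part of the variation absolutely continuous with respect to $\mu$. Your argument never invokes $Q>1$, so if it were valid it would equally prove the $Q=1$ case on $X=[0,1]$; but the Cantor staircase $c$ has $\mu_c\big((a,b)\big)=c(b)-c(a)$, hence $h=0$ a.e.\ and $g\equiv0$, while $|c(1)-c(0)|=1>0=\int_0^1 g$. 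Discarding curves that spend positive length in the bad set $E$ does not help, since the Cantor set is null and the identity curve spends zero length in it; the decomposition into $\Omega_k=\{r_x>1/k\}$ controls the scale $r_x$ but not this singular part of $\mu_f$. (A smaller issue: your $\mu_f$ on open sets is not obviously countably subadditive --- a ball in $\bigcup_n U_n$ need not sit inside any single $U_n$ --- so the ``usual metric outer measure construction'' is not available. The inequality $\int_\Omega h\,d\mu\lesssim V_Q(f,\Omega)$ can still be obtained by a direct $5r$-covering argument using only superadditivity of $\mu_f$ on disjoint balls, essentially as in the paper's proof of Proposition~\ref{aediff}.)

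The paper avoids differentiating $\mu_f$ altogether. It builds discrete convolutions $f_\varepsilon$ via a Whitney partition of unity and writes down explicit upper gradients $g_\varepsilon=\sum_i\tfrac{\diam f(4B_i^\varepsilon)}{r_{\varepsilon,i}}\chi_{B_i^\varepsilon}$; Ahlfors $Q$-regularity turns $\mu(B_i^\varepsilon)/r_{\varepsilon,i}^Q$ into a constant, and splitting $\{4B_i^\varepsilon\}$ into boundedly many disjoint subfamilies gives the uniform bound $\|g_\varepsilon\|_{L^Q(\Omega)}^Q\le C\,V_Q(f,\Omega)$. Since $Q>1$, $L^Q$ is reflexive, so a subsequence of $(g_\varepsilon)$ converges weakly to some $g\in L^Q(\Omega)$; combined with $f_\varepsilon\to f$ in $L^Q_{\loc}$ and the stability of weak upper gradients under such convergence \cite[Theorem~7.3.8]{HKST2} --- which needs no Poincar\'e inequality --- this makes $Cg$ a $Q$-weak upper gradient of a suitable representative of $f$. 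The hypothesis $Q>1$ enters precisely here, through reflexivity.
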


We will always understand $Y$ to be isometrically embedded into a Banach space $V$.
Cheeger \cite{Che99} shows that if $\mu$ is  doubling and admits a $ p$-Poincar\'e  inequality
for $p\ge1$, then the space $X$ acquires a measurable differentiable structure, and for every
$u\in N^{1,p}(X)$ one can define almost everywhere a gradient 
$x\mapsto Du(x) \in \R^N$, whose length is comparable to the minimal $p$-weak upper gradient of $u$. Thanks to
the Stepanov theorem proved by Wildrick and Z\"urcher \cite{WZ}, we can obtain that
mappings of bounded $Q$-variation
into $V$ are almost everywhere Cheeger differentiable,
provided that $V$ satisfies the Radon-Nikodym property.

\begin{prop}\label{aediff}
Let $X$ be an Ahlfors $Q$-regular space with $Q\ge 1$. Assume that there exists a measurable
differentiable structure $\{(U_\alpha, x_\alpha)\}$ for $(X,d,\mu)$,
and that $V$ satisfies the Radon-Nikodym property. If $\Omega\subset X$ is open and
$f\in BV^Q_\loc(\Omega;V)$, then $f$ is differentiable almost everywhere
with respect to the structure $\{(U_\alpha, x_\alpha)\}$.
\end{prop}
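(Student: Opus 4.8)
The plan is to verify the single hypothesis of the metric Stepanov differentiability theorem of Wildrick and Z\"urcher \cite{WZ}: given the measurable differentiable structure $\{(U_\alpha,x_\alpha)\}$ and a target Banach space $V$ with the Radon--Nikodym property, a map $f\colon\Omega\to V$ is differentiable with respect to the structure at $\mu$-a.e.\ point of
\[
A:=\{x\in\Omega:\ \Lip f(x)<\infty\},\qquad
\Lip f(x):=\limsup_{r\to0^+}\frac1r\sup_{y\in B(x,r)}\|f(y)-f(x)\|_V.
\]
Hence it suffices to show that $\Lip f(x)<\infty$ for $\mu$-a.e.\ $x\in\Omega$, i.e.\ that $A$ has full measure in $\Omega$.

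First I would pass from the combinatorial definition of $BV^Q_\loc$ to the associated $Q$-variation measure, extracting from the $BV^Q$ structure a Radon measure $\lambda_f$ on $\Omega$, finite on compact subsets, enjoying the pointwise domination
\[
(\diam f(B(x,r)))^Q\le \lambda_f(B(x,\sigma r))
\]
for every ball with $\overline{B(x,\sigma r)}\subset\Omega$, where $\sigma\ge1$ is a fixed dilation coming from the definition; this simply records that a single ball is an admissible one-element family in the supremum defining the $Q$-variation.

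Next, since $\mu$ is doubling, the Lebesgue--Radon--Nikodym differentiation theorem applies. Writing $\lambda_f=h\,d\mu+\lambda_f^s$ with $h\in L^1_\loc(\Omega)$ and $\lambda_f^s\perp\mu$, for $\mu$-a.e.\ $x$ the upper density
\[
\limsup_{r\to0^+}\frac{\lambda_f(B(x,r))}{\mu(B(x,r))}=h(x)<\infty
\]
is finite, the singular part contributing nothing to the limit at $\mu$-a.e.\ point. Combining this with Ahlfors $Q$-regularity, $\mu(B(x,r))\approx r^Q$, and with the doubling property (which absorbs the dilation $\sigma$), we obtain for $\mu$-a.e.\ $x$ and all small $r$
\[
\frac1r\sup_{y\in B(x,r)}\|f(y)-f(x)\|_V\le\frac{\diam f(B(x,r))}{r}\le\frac{\lambda_f(B(x,\sigma r))^{1/Q}}{r}\le C\,h(x)^{1/Q}.
\]
Thus $\Lip f(x)\le C\,h(x)^{1/Q}<\infty$ almost everywhere, so $A=\Omega$ up to a null set and the Stepanov theorem yields differentiability a.e.

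The main obstacle is the first step: making rigorous the passage from the combinatorial $Q$-variation, defined through finite families of \emph{disjoint} balls, to a genuine Radon measure $\lambda_f$ that dominates $(\diam f(B))^Q$ on \emph{individual} balls, and tracking the (harmless, by doubling) dilation factor $\sigma$. A second, more conceptual point is confirming that the Wildrick--Z\"urcher theorem indeed applies to $V$-valued maps rather than only to scalar targets: this is exactly where the Radon--Nikodym property of $V$ enters, guaranteeing that the candidate differential exists as a bounded linear map from the fibre of the differentiable structure into $V$ and that the scalar Stepanov argument can be carried out along the coordinate directions $x_\alpha$.
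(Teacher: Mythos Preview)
Your high-level plan---reduce to the Wildrick--Z\"urcher Stepanov theorem by establishing $\Lip f<\infty$ $\mu$-a.e.---is exactly the paper's route, and your remark about the Radon--Nikodym property of $V$ is correct and matches Theorem~\ref{thm:Stepanov}. The divergence, and the gap, is in how you reach that finiteness.

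The step you yourself flag as the ``main obstacle'' is a genuine gap, not a routine technicality. The set function $U\mapsto V_Q(f,U)$ on open sets is monotone and superadditive on disjoint unions, but there is no standard construction that promotes it to a Radon measure $\lambda_f$ with the ball-wise domination $(\diam f(B(x,r)))^Q\le\lambda_f(B(x,\sigma r))$. Your justification---``a single ball is an admissible one-element family''---only yields $(\diam f(B))^Q\le V_Q(f,U)$ for \emph{any} open $U\supset B$; it does not localize, and it certainly does not produce countable additivity. Without an actual measure $\lambda_f$, the Lebesgue--Radon--Nikodym differentiation has nothing to act on, and the chain of inequalities leading to $\Lip f(x)\le C\,h(x)^{1/Q}$ is unfounded. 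In effect you have assumed the conclusion: a finite $\mu$-a.e.\ density of something controlling $(\diam f(B))^Q$ is precisely what must be proved.

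The paper bypasses the measure entirely and proves the stronger integral estimate $\int_\Omega(\Lip f)^Q\,d\mu\le C\,V_Q(f,\Omega)$ directly, via a Vitali-type covering argument adapted from Mal\'y. One approximates $(\Lip f)^Q$ from below by an upper semicontinuous simple function $g$; for each $x$ with $2^{k}\le g(x)<2^{k+1}$ one uses upper semicontinuity of $g$ together with $g(x)\le(\Lip f(x))^Q$ to pick a small radius $r$ with $5B_r(x)\subset\{g<2^{k+1}\}$ and $2^k r^Q\lesssim(\diam f(B_r(x)))^Q$; one extracts a disjoint subfamily by the $5$-covering lemma; and one sums over dyadic levels $k$, using Ahlfors $Q$-regularity to convert $r_i^Q$ into $\mu(5B_i)$. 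The disjointness of the extracted balls is what allows one to invoke $V_Q(f,\Omega)$ at the end. This covering-and-level-set argument is the real content that your differentiation shortcut was meant to replace.
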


In the Euclidean case $\Omega\subset \mathbb{R}^n$,
every mapping $f\in N^{1,p}_\loc(\Omega;\R^n)$ for $p>n$ is immediately seen to be $n$-absolutely continuous
by a combination of the Sobolev embedding theorem that gives H\"older continuity,
and Young's inequality \cite[Theorem 4.1]{Maly}.
Assume the metric measure space $X$ with homogeneous dimension $Q\ge 1$ supports a $p$-Poincar\'e inequality,
with $p>Q$.
Due to
the dilation factor in balls on the right-hand side of the Sobolev embedding theorem,
the Euclidean argument does not apply to achieve $Q$-absolute continuity of $f\in N^{1,p}_\loc(\Omega;Y)$.
However, by additionally using the Hardy-Littlewood maximal function, the same result can still be achieved.

\begin{thm}\label{pgq}
Assume that $\mu$ is doubling, that
$p>Q$ where $Q\ge 1$ is the homogeneous dimension of $X$, and that $X$ supports a $p$-Poincar\'e
inequality.  Let $\Omega\subset X$ be a bounded open set.
Then every Sobolev mapping $f\in N^{1,p}(\Omega;Y)$, by choosing a suitable pointwise representative,
satisfies the Rado--Reichelderfer
condition \eqref{RR} locally in $\Omega$, and thus is locally $Q$-absolutely continuous,
as well as absolutely continuous in measure.
\end{thm}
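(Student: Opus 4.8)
The plan is to reduce everything to verifying the Rado--Reichelderfer condition \eqref{RR} on balls $B=B(x_0,r)$ whose dilate $\lambda B$ (where $\lambda\ge1$ is the dilation constant of the $p$-Poincar\'e inequality) still lies in $\Omega$; that is, to produce a nonnegative $\theta\in L^1_{\loc}(\Omega)$ with $(\diam f(B))^Q\le C\int_B\theta\,d\mu$. Once this estimate is in hand the two remaining conclusions are immediate: for a finite family of \emph{disjoint} balls $\{B_i\}$ contained in a fixed $\Omega'\Subset\Omega$, disjointness gives
\[
\sum_i(\diam f(B_i))^Q\le C\sum_i\int_{B_i}\theta\,d\mu=C\int_{\bigcup_iB_i}\theta\,d\mu,
\]
and since $\mu(\bigcup_iB_i)=\sum_i\mu(B_i)$, the absolute continuity of the integral of $\theta\in L^1$ yields local $Q$-absolute continuity, from which absolute continuity in measure follows as recorded in the introduction. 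Throughout I fix an upper gradient $g\in L^p(\Omega)$ of $f$ (e.g. the minimal $p$-weak upper gradient), extended by zero, and work on a subdomain $\Omega'\Subset\Omega$; the passage from the estimate on single balls to the local statement is a routine localization.

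The first substantive step is a Morrey--Sobolev oscillation bound: for every ball $B=B(x_0,r)$ with $\lambda B\subset\Omega$,
\[
\diam f(B)\le Cr\left(\frac{1}{\mu(\lambda B)}\int_{\lambda B}g^p\,d\mu\right)^{1/p}.
\]
I would obtain this by the standard telescoping argument. Writing $B_j=B(x_0,2^{-j}r)$ and estimating $\|f(x)-f_B\|_V\le\sum_j\|f_{B_{j+1}}-f_{B_j}\|_V$ through the Bochner averages $f_{B_j}$, each increment is controlled by the $p$-Poincar\'e inequality on $B_j$; inserting the lower mass bound \eqref{lmb} to compare $\mu(\lambda B_j)$ with $\mu(\lambda B)$ turns the sum into a geometric series with ratio governed by $2^{-j(1-Q/p)}$, which converges \emph{precisely because} $p>Q$. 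For the $V$-valued map the triangle inequality makes this run verbatim, and it simultaneously selects the continuous representative $x\mapsto\lim_{r\to0}f_{B(x,r)}$.

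Raising this to the power $Q$ gives $(\diam f(B))^Q\le Cr^Q\big(\tfrac{1}{\mu(\lambda B)}\int_{\lambda B}g^p\,d\mu\big)^{Q/p}$, and two moves convert it into \eqref{RR}. First, since $\Omega$ is bounded, \eqref{lmb} yields $r^Q\le C\mu(B)\le C\mu(\lambda B)$ with a constant depending only on the fixed reference scale $\diam\Omega$. Second---and this is where the dilated ball $\lambda B$ must be absorbed---I bound the average of $g^p$ over $\lambda B$ by the Hardy--Littlewood maximal function evaluated anywhere in $B$: for every $z\in B$ doubling gives $\tfrac{1}{\mu(\lambda B)}\int_{\lambda B}g^p\,d\mu\le C\,\mathcal{M}(g^p)(z)$, so that $\big(\tfrac{1}{\mu(\lambda B)}\int_{\lambda B}g^p\big)^{Q/p}\le C\inf_{z\in B}(\mathcal{M}(g^p)(z))^{Q/p}$. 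Setting $\theta:=(\mathcal{M}(g^p))^{Q/p}$ and combining $r^Q\le C\mu(\lambda B)$, $\inf_B\theta\le\tfrac{1}{\mu(B)}\int_B\theta$, and doubling $\mu(\lambda B)\le C\mu(B)$ gives $(\diam f(B))^Q\le C\int_B\theta\,d\mu$, which is exactly \eqref{RR}. The gain is that the integral now sits over $B$ itself, so summing over disjoint balls never sees the overlapping dilates $\lambda B_i$.

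It remains to check $\theta\in L^1_{\loc}(\Omega)$, and here $p>Q$ does its second job. Since $g\in L^p(\Omega)$ we have $g^p\in L^1$, so $\mathcal{M}(g^p)$ is only of weak type $(1,1)$ and need not be integrable; but because $Q/p<1$, Kolmogorov's inequality gives $\int_{\Omega'}\theta\,d\mu\le C\mu(\Omega')^{1-Q/p}\|g^p\|_{L^1}^{Q/p}<\infty$. I expect the main obstacle to be exactly the interplay flagged before the statement: the Poincar\'e inequality forces the gradient mass onto the dilated ball $\lambda B$, and naively summing $\int_{\lambda B_i}g^p$ over disjoint $B_i$ fails because the dilates overlap uncontrollably. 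Routing the estimate through the maximal function pulls the mass back onto the undilated ball, but at the cost of replacing the integrable $g^p$ by the non-integrable $\mathcal{M}(g^p)$; it is precisely the subcriticality $Q/p<1$, via Kolmogorov, that restores integrability of $\theta$ and closes the argument.
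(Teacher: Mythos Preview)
Your proof is correct and reaches the same conclusion, but by a genuinely different route from the paper's. The paper first invokes the Keith--Zhong self-improvement theorem to obtain a $q$-Poincar\'e inequality for some intermediate $q\in(Q,p)$; it then runs the Morrey embedding at exponent $q$, bounds the average of $g^q$ by $\mathcal{M}(g^q)$, and---after taking the $Q/q$-th power and applying Young's inequality to split $r^{Q(1-Q/q)}(\int\mathcal{M}\rho)^{Q/q}$ into $r^Q+\int\mathcal{M}\rho$---takes $h\approx 1+\mathcal{M}(g^q)$ as the Rado--Reichelderfer weight. Integrability of $h$ then comes from the \emph{strong} Hardy--Littlewood maximal theorem, since $g^q\in L^{p/q}$ with $p/q>1$. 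You instead work directly at exponent $p$, set $\theta=(\mathcal{M}(g^p))^{Q/p}$, and exploit the subcriticality $Q/p<1$ through Kolmogorov's inequality on the weak-$(1,1)$ bound. Your argument is more elementary in that it sidesteps Keith--Zhong entirely and needs no Young-type splitting; the paper's route buys a weight with slightly better integrability ($h\in L^{p/q}_{\loc}$ rather than merely $L^1_{\loc}$), though that extra integrability is not used anywhere in the theorem.
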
 

A continuous Sobolev mapping $f\in W^{1,n}$ does not necessarily satisfy the Lusin property
in the Euclidean space,  see for example \cite[Theorem 4.3]{HenKo}
for a continuous mapping $f\in W^{1,n}([-1,1]^n; [-1,1]^n)$ such that $f([-1,1]\times\{0\}^{n-1})=[-1,1]^n$.
More pathological examples can be found in \cite{HaZ} where a Sobolev homeomorphism
$f\colon S^n\to \mathbb{R}^{n+1}$ maps a set of Hausdorff dimension zero to Cantor set with
positive $n+1$-dimensional measure. With some additional conditions including being a
homeomorphism, or being continuous and open, or being a quasiregular mapping, one can obtain
the Lusin property of a Sobolev mapping $f\in W^{1,n}$ in Euclidean spaces, see for example \cite{MaMa, KoMaZu}
and the references therein. As pointed out in \cite[Page 458]{KoMaZu}, the essential qualitative
information leading to the validity of the Lusin property is the pseudomonotone condition, and a
continuous pseudomonotone Sobolev mapping $f\in W^{1, n}_\loc(\Omega)$ is shown to be locally
$n$-absolutely continuous in \cite[Theorem 4.3]{Maly}. A mapping $f\colon X\to Y$
is called pseudomonotone if there is $C_m\ge 1$ such that 
\[
\diam f(B_r(x))\le C_m\diam f(S_r(x))
\]
for all $x\in X$ and $r>0$,
where $S_r(x)$ denotes the sphere.
The following theorem recovers the above result in metric measure spaces. 

\begin{thm}\label{main}
Suppose $\mu$ is doubling with homogeneous dimension $Q>1$, and $X$
supports a $Q$-Poincar\'e inequality. Let $\Omega\subset X$ be an open set and let $f\in D^{Q}(\Omega;Y)$ be a continuous
pseudomonotone mapping. 
Then $f$ satisfies the condition \eqref{RR} with exponent $Q$ locally, and hence
is locally $Q$-absolutely continuous,
as well as absolutely continuous in measure.
\end{thm}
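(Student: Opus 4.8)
The plan is to reduce the statement to the single pointwise estimate
\begin{equation}
(\diam f(B_r(x)))^Q \le C \int_{B_{\Lam r}(x)} g^Q \, d\mu,
\tag{$\star$}
\end{equation}
valid for all balls with $B_{\Lam r}(x)\Subset\Omega$, where $g\in L^Q_\loc(\Omega)$ is a $Q$-integrable upper gradient of $f$ (which exists since $f\in D^{Q}(\Omega;Y)$), $\Lam>1$ is a fixed dilation, and $C$ depends only on the structural constants (doubling constant, Poincar\'e data, $C_m$ and $Q$). Indeed, setting $\theta:=Cg^Q$, which lies in $L^1_\loc(\Omega)$ because $g\in L^Q_\loc$, the estimate $(\star)$ is precisely the condition \eqref{RR} with exponent $Q$ holding locally in $\Omega$. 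The remaining implications---\eqref{RR} $\Rightarrow$ local $Q$-absolute continuity $\Rightarrow$ absolute continuity in measure---are then obtained exactly as in \cref{pgq}: for disjoint balls $\{B_i\}$ one sums $(\star)$, controls the bounded overlap of the dilated balls by the doubling property, and invokes the absolute continuity of $E\mapsto\int_E g^Q\,d\mu$. Thus everything comes down to proving $(\star)$.

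To establish $(\star)$ I would first exploit pseudomonotonicity to pass from the ball to spheres. Since $t\mapsto\diam f(B_t(x))$ is nondecreasing, for every $t\in[r,\Lam r]$ we have $D:=\diam f(B_r(x))\le\diam f(B_t(x))\le C_m\diam f(S_t(x))$, so the oscillation of $f$ on every sphere $S_t(x)$ across the annulus is at least $D/C_m$. It then suffices to bound this common spherical oscillation by the $Q$-energy of $g$ in a surrounding annulus, say
\[
\diam f(S_t(x)) \le C\Big(\int_{B_{\Lam r}(x)\setminus B_{r/\Lam}(x)} g^Q\,d\mu\Big)^{1/Q},
\]
after which raising to the power $Q$, using $\diam f(B_r(x))\le C_m\diam f(S_t(x))$, and absorbing the annulus into $B_{\Lam r}(x)$ yields $(\star)$. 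Two mechanisms are available for the spherical bound. Following Mal\'y's Euclidean argument \cite[Theorem 4.3]{Maly}, one integrates over $t$: the metric coarea inequality for the $1$-Lipschitz map $y\mapsto d(x,y)$ distributes $\int g^Q\,d\mu$ over the spheres, and on each sphere---of codimension one, hence of effective dimension $Q-1<Q$---a Morrey-type (super-critical) Sobolev estimate controls $\diam f(S_t(x))$ by $t^{1/Q}\big(\int_{S_t(x)}g^Q\big)^{1/Q}$. Alternatively, and more robustly, one estimates the oscillation through the $Q$-modulus of curve families in the annulus: writing $u=d_Y(f(\cdot),f(x))$, whose upper gradient is again $g$, one produces super- and sub-level sets of $u$ that, by pseudomonotonicity and continuity, separate the two boundary components of the ring, tests the $Q$-capacity of the resulting condenser with a truncation of $u$ (whose upper gradient is at most $g/D$ on the annulus), and bounds the capacity from below by a positive constant using the $Q$-Poincar\'e inequality and doubling.

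The hard part will be exactly this borderline spherical/annular oscillation bound. At the critical exponent $p=Q$, a Sobolev map $f\in N^{1,Q}$ need be neither continuous nor of bounded oscillation, and a single application of the Poincar\'e inequality yields only control of the \emph{mean} oscillation, never of $\diam f(B_r(x))$; this is precisely why \cref{pgq} required $p>Q$. Pseudomonotonicity is the ingredient that upgrades the $L^Q$-energy bound to a pointwise diameter bound, by confining the oscillation of $f$ on a ball to the oscillation on the bounding sphere. The genuinely delicate point in the metric setting is that the spheres $S_t(x)$ need not be Ahlfors $(Q-1)$-regular nor support a Poincar\'e inequality, so the naive restriction-to-spheres argument must be replaced---or supplemented---by the curve-modulus and capacity estimates in annuli, whose lower bounds are supplied by the standing hypotheses of doubling and a $Q$-Poincar\'e inequality. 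Once $(\star)$ is in hand, the conclusion follows as above and as in \cref{pgq}.
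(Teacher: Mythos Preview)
Your outline captures the correct qualitative picture—pseudomonotonicity reduces the ball oscillation to a sphere oscillation, and the latter is controlled by the $L^Q$-energy of $g$—but there is a genuine gap in passing from your estimate $(\star)$ to the conclusion of the theorem.

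First, $(\star)$ is \emph{not} the condition \eqref{RR}: the integral on the right is over the dilated ball $B_{\Lambda r}(x)$, not $B_r(x)$. This is the ``weak \eqref{RR}'' alluded to in the Remark after Theorem~\ref{main}, and the theorem asserts the genuine \eqref{RR}. More seriously, your deduction of $Q$-absolute continuity from $(\star)$ is flawed: it is \emph{not} true that dilations of an arbitrary disjoint family of balls have bounded overlap merely from doubling. For instance, in $\R^2$ take $B_k=B_{2^{-k-1}}\big((0,3\cdot 2^{-k})\big)$; these are pairwise disjoint, yet every $7B_k$ contains the origin, so $\sum_k\chi_{7B_k}(0)$ equals the cardinality of the family. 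Hence summing $(\star)$ over the $B_i$ gives $\sum_i\int_{\Lambda B_i}g^Q=\int g^Q\sum_i\chi_{\Lambda B_i}$, and the overlap factor is uncontrolled; the absolute continuity of $E\mapsto\int_E g^Q$ does not apply.

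The paper removes the dilation by a maximal-function trick carried out at a \emph{subcritical} exponent. By Keith--Zhong one has a $p$-Poincar\'e inequality for some $p\in(Q-1,Q)$; the Haj\l asz--Koskela embedding on spheres (Theorem~\ref{embedding on spheres}) then gives, for a good radius $R\in(r,2r)$, that $(\osc_{S_R(x)}f)^p\le C r^p\,\fint_{10\sigma B_r(x)}g^p$. Pseudomonotonicity converts this to $(\osc_{B_r(x)}f)^p\le C r^p\,\mathcal M(g^p)(z)$ for every $z\in B_r(x)$. Integrating in $z$ over $B_r(x)$, using H\"older and the lower mass bound, one gets
\[
(\osc_{B_r(x)}f)^Q\le C(B_0)\int_{B_r(x)}\big(\mathcal M(g^p)\big)^{Q/p}\,d\mu,
\]
with the integral over the \emph{undilated} ball. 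Since $g^p\in L^{Q/p}$ with $Q/p>1$, the Hardy--Littlewood maximal theorem gives $h:=\big(\mathcal M(g^p)\big)^{Q/p}\in L^1$, which is the \eqref{RR} weight. Note this fails at $p=Q$: $\mathcal M(g^Q)$ is only weak-$L^1$, so setting $h=\mathcal M(g^Q)$ would not produce an integrable weight. Your two proposed routes to the sphere bound are in the right spirit (and the paper's use of Theorem~\ref{embedding on spheres} is essentially a packaged form of the first), but without the drop to $p<Q$ and the maximal-function step you cannot eliminate the dilation, and thus cannot reach \eqref{RR} as stated.
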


\begin{remark}
Similar results as Theorem \ref{pgq} and Theorem \ref{main} are obtained by Marola and Ziemer
in \cite[Proposition 5.1]{MaZi}. Compared to their result, our Theorem \ref{pgq} is stronger
in the sense that it implies the condition \eqref{RR} instead of weak \eqref{RR} in the case $p>Q$.

Another proof for every continuous pseudomonotone Sobolev mapping $f\in N^{1,Q}_\loc(X;V)$ being
absolutely continuous in measure
is given by Heinonen, Koskela, Shanmugalingam and Tyson \cite[Theorem 7.2]{HKST1}.
\end{remark}

%
%


In \cite{HKST1}, also the absolute continuity in measure of quasiconformal mappings in metric spaces with
appropriate structure assumptions is discussed.
In Euclidean spaces, it has been known since
Gehring \cite{Ge1,Ge2} that the definition of quasiconformality can be relaxed
to allow certain exceptional sets in the conditions on the distortion $H_f$ or $h_f$.
In a proper Ahlfors $Q$-regular spaces $X,Y$, with $X$ supporting a $1$-Poincar\'e inequality,
such a relaxed definition is shown to imply
Sobolev $N^{1,Q}_{\loc}(X;Y)$-regularity in \cite{BKR}.

In the recent work of the authors \cite{LaZh1}, the conditions on 
$h_f$ needed to obtain Sobolev $N^{1,Q}_{\loc}(X;Y)$-regularity are further weakened
by considering a weight function $w_Y$.
Combining this with Theorem \ref{main}, we obtain the following corollary.
Due to the weight function, the corollary gives something new already when $X=Y$ is the unweighted
Euclidean space.

\begin{cor}\label{Lusin}
Let $Q>1$. Suppose $(X,d,\mu)$ and $(Y,d_Y,\nu)$ are Ahlfors $Q$-regular,
and $X$ supports a $1$-Poincar\'e inequality. 
Suppose $w_Y\in L^1_{\loc}(Y)$ with $w_Y>0$ represented by \eqref{wY representative}.
Let $\Omega\subset X$ be a bounded open set, and
let $E\subset \Omega$ be a set of $\sigma$-finite $(Q-1)$-dimensional Hausdorff measure.
Assume $f\colon\Omega\to f(\Omega)\subset Y$ is a pseudomonotone homeomorphism such that
$f(\Omega)$ is open, $\int_{f(\Omega)}w_Y\,d\nu<\infty$,
$h_f(x)<\infty$ for every $x\in \Omega\setminus E$, and $\frac{{h_f(\cdot)^Q}}{w_Y(f(\cdot))}\in L^\infty(\Omega)$. 
Then $f$ is locally $Q$-absolutely continuous.
\end{cor}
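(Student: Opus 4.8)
The plan is to prove the corollary in two stages: first upgrade the geometric distortion hypotheses on $f$ to Sobolev regularity, and then feed the resulting continuous pseudomonotone Sobolev mapping into Theorem~\ref{main}. The decisive observation is that every hypothesis we are handed---Ahlfors $Q$-regularity of $X$ and $Y$, the $1$-Poincar\'e inequality on $X$, the weight $w_Y$ represented by \eqref{wY representative}, the finiteness $h_f(x)<\infty$ off the exceptional set $E$, and the two integrability conditions $\int_{f(\Omega)}w_Y\,d\nu<\infty$ and $h_f^Q/w_Y(f(\cdot))\in L^\infty(\Omega)$---is precisely the package under which the authors' regularity theorem in \cite{LaZh1} applies.

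First I would invoke that theorem. Since $E$ has $\sigma$-finite $(Q-1)$-dimensional Hausdorff measure, it is negligible for the modulus estimates underlying the relaxed quasiconformality argument, so the pointwise distortion is permitted to blow up on $E$ without affecting the outcome. The two integrability conditions serve to bound the minimal $Q$-weak upper gradient $g_f$: writing $\theta_f$ for the volume derivative of $f$, one has the pointwise estimate $g_f^Q\lesssim h_f^Q\,\theta_f$, and the change-of-variables inequality for the homeomorphism $f$ together with $h_f^Q\le \|h_f^Q/w_Y(f(\cdot))\|_{L^\infty(\Omega)}\,w_Y(f)$ gives
\[
\int_{\Omega} g_f^Q\,d\mu \;\lesssim\; \Big\|\tfrac{h_f^Q}{w_Y(f(\cdot))}\Big\|_{L^\infty(\Omega)}\int_{\Omega} w_Y(f)\,\theta_f\,d\mu \;\le\; \Big\|\tfrac{h_f^Q}{w_Y(f(\cdot))}\Big\|_{L^\infty(\Omega)}\int_{f(\Omega)} w_Y\,d\nu \;<\;\infty .
\]
This is exactly the mechanism carried out in \cite{LaZh1}, and it delivers $f\in N^{1,Q}_\loc(\Omega;Y)$; in particular $f\in D^{Q}(\Omega';Y)$ for every open $\Omega'\Subset\Omega$.

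With Sobolev regularity in hand, the second stage is a direct appeal to Theorem~\ref{main}. Ahlfors $Q$-regularity makes $\mu$ doubling with homogeneous dimension $Q>1$, and since a $1$-Poincar\'e inequality self-improves to a $q$-Poincar\'e inequality for every $q\ge 1$, the space $X$ in particular supports a $Q$-Poincar\'e inequality. The mapping $f$ is continuous, being a homeomorphism, and it is pseudomonotone by assumption. Fixing a ball $B\Subset\Omega$, the mapping $f\in D^{Q}(B;Y)$ then meets every hypothesis of Theorem~\ref{main} on $B$, so it satisfies the Rado--Reichelderfer condition \eqref{RR} with exponent $Q$ locally in $B$, and is therefore locally $Q$-absolutely continuous there. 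As local $Q$-absolute continuity is a local property and such balls cover $\Omega$, $f$ is locally $Q$-absolutely continuous throughout $\Omega$.

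I expect the only genuine difficulty to lie in the first stage---and it is already resolved in \cite{LaZh1}: controlling the upper gradient through the weight $w_Y$ while discarding the exceptional set $E$. Everything after that is bookkeeping: verifying that the self-improved $Q$-Poincar\'e inequality and the doubling property hold, that continuity and pseudomonotonicity pass through unchanged, and that the restriction from $N^{1,Q}_\loc$ to $D^{Q}$ on relatively compact subsets is legitimate, so that Theorem~\ref{main} applies verbatim.
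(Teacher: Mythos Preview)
Your proposal is correct and follows essentially the same two-step approach as the paper: invoke the regularity result from \cite{LaZh1} (recorded in the paper as Theorem~\ref{previouspaper}) to obtain $f\in D^{Q}_{\loc}(\Omega;Y)$, and then apply Theorem~\ref{main}. The paper's own proof is just those two sentences; your additional verification that Ahlfors $Q$-regularity yields the doubling/homogeneous-dimension hypotheses and that the $1$-Poincar\'e inequality self-improves to a $Q$-Poincar\'e inequality is sound and makes explicit what the paper leaves implicit.
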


The paper is organized as follows: we review definitions and some preliminary theorems in
Section \ref{prelim}, and in Section \ref{BVloc} we prove
Theorem \ref{BVisSobolev} and Proposition \ref{aediff}, which give
Sobolev regularity and almost everywhere differentiability of mappings of bounded $Q$-variation.
In Section \ref{Sobo} we prove Theorems \ref{pgq} and \ref{main}
and Corollary \ref{Lusin}, which give $Q$-absolute continuity of certain Sobolev and quasiconformal mappings.


\section{preliminaries}\label{prelim}

\subsection{Basic definitions}
Let $(X,d,\mu)$ and $(Y,d_Y, \nu)$ be metric measure spaces,
equipped with Borel regular outer measures $\mu,\nu$.
We assume that $Y$ is separable.

We say that the measure $\mu$ is doubling if there exists a constant $C_d\ge 1$
such that
\[
0<\mu(B_{2r}(x))\le C_d\mu(B_r(x))<\infty
\]
for all balls
$B_r(x)$ with $x\in X$ and $r>0$.
We understand balls to be open.
Given a ball $B=B_r(x)$, we sometimes denote $2B=B_{2r}(x)$;
in a metric space a ball (as a set) may not have a unique center point and radius, but we will understand
these to be prescribed whenever using this notation.
If $\mu$ is doubling, then there exist an exponent $Q\ge 1$ and a constant $C\ge 1$,
both depending only on the doubling constant, such that
\begin{equation}\label{lmb}
\frac{\mu(B_r(x))}{\mu(B_{r_0}(x_0))}\ge \frac{1}{C}\left(\frac{r}{r_0}\right)^Q
\end{equation}
for every ball $B_{r_0}(x_0)$ and every $x\in B_{r_0}(x_0)$, $r\le 2r_0.$ If such $Q$ and $C$ exist,
we say that the measure $\mu$ satisfies the lower mass bound \eqref{lmb}, and that $X$ has homogeneous
dimension $Q$.

We say that a space is proper if every closed and bounded set is compact. If $\mu$ is doubling,
then $X$ is separable, and if additionally $X$ is complete, then it is proper,
see e.g. \cite[p. 102--103]{HKST2}.

The $s$-dimensional Hausdorff content is denoted by $\mathcal H_R^s$, with $R>0$ and $s\ge 0$,
and then the $s$-dimensional Hausdorff measure $\mathcal H^s$ is obtained as the limit when $R\to 0$.
These definitions extend automatically to metric spaces.

We say that
the space $X$ is Ahlfors $Q$-regular if there exists a constant $C\ge 1$ such that
\[
C^{-1}r^Q\le\mu(B_r(x))\le Cr^Q
\]
whenever $x\in X$ and $0<r<{\rm diam}(X)$.
If $X$ is Ahlfors $Q$-regular, then the measure $\mu$
and the $Q$-dimensional Hausdorff measure $\mathcal{H}^Q$ are comparable.

 A continuous mapping $\gamma\colon [a,b]\to X$ is said to be a rectifiable curve if it has finite length. A rectifiable curve always admits an arc-length parametrization (see e.g. \cite[Theorem 3.2]{Haj}),
 so that we obtain a curve $\gamma\colon [0,\ell_{\gamma}]\to X$.
 Then if $g\colon \gamma([0,\ell_{\gamma}])\to [0, \infty]$ is a Borel function, we define
\[
\int_{\gamma} g\,ds:=\int_0^{\ell_{\gamma}}g(\gamma(s))\,ds.
\]

We always consider $1\le p<\infty$,
and by $\Omega\subset X$ we denote an open set.
As usual, a mapping $f\colon \Omega\to Y$ is said to 
be $\mu$-measurable if for every open set $W\subset Y$ we have that
$f^{-1}(W)$ is a $\mu$-measurable set.
By the Kuratowski embedding theorem,
every metric space, in particular $Y$, can be isometrically embedded into a Banach space,
for example $V=L^\infty(Y)$. See e.g. \cite[p. 100]{HKST2}.
We denote by $|\cdot|$ the norm on the real line as well as in the Banach space $V$.
By a simple mapping $f\colon \Omega\to V$ we mean a finite sum $f=\sum_i v_i\chi_{E_i}$, with $v_i\in V$
and the sets $E_i\subset \Omega$ are pairwise disjoint and $\mu$-measurable with $\mu(E_i)<\infty$.
The integral of a simple mapping is given by
\[
\int_{\Omega}f\,d\mu =\sum_{i}v_i\mu(E_i).
\]

\begin{defn}[Integrable mappings]
We say that a measurable mapping $f\colon \Omega\to V$ is (Bochner) integrable
if there exists a sequence of simple mappings
$\{f_j\}_{j=1}^{\infty}$ such that
\[
\int_\Omega |f_j-f|\,d\mu\to 0.
\]
Then the integral of $f$ is defined by
\[
\int_\Omega f\,d\mu=\lim_{j\to \infty}\int_\Omega f_j\,d\mu.
\]
Moreover, we say that $f\in L^p(\Omega;Y)$
if $|f|\in L^p(\Omega)$.
\end{defn}

If $f$ is $\mu$-measurable and $|f|\in L^1(\Omega)$, it follows that $f$ is integrable,
see \cite[Proposition 3.2.7]{HKST2}.

\begin{defn}[Upper gradient]\label{def:upper gradient}
Let $f\colon \Omega\to Y$. We say that a Borel
function $g\colon \Omega\to [0,\infty]$ is an upper gradient of $f$ in $\Omega$ if
\begin{equation}\label{eq:upper gradient inequality}
d_Y(f(\gamma(\ell_{\gamma})),f(\gamma(0)))\le \int_{\gamma}g\,ds
\end{equation}
for every curve $\gamma\colon [0,\ell_{\gamma}]\to \Omega$.
We use the conventions $\infty-\infty=\infty$ and
$(-\infty)-(-\infty)=-\infty$.
If $g\colon X\to [0,\infty]$ is a $\mu$-measurable function
and (\ref{eq:upper gradient inequality}) holds for $p$-almost every curve,
we say that $g$ is a $p$-weak upper gradient of $u$.
A property is said to hold for $p$-almost every curve
if there exists a nonnegative Borel function $\rho\in L^p(X)$ such that $\int_{\gamma}\rho\,ds=\infty$
for every curve $\gamma$ for which the property fails.
\end{defn}

The Sobolev class
$N^{1,p}(\Omega;Y)$ consists of those mappings $f\in L^p(\Omega;Y)$
for which there exists an upper gradient $g\in L^p(\Omega)$.
The Dirichlet class
$D^{p}(\Omega;Y)$ consists of those mappings
$f\colon \Omega\to Y$ for which there exists an upper gradient $g\in L^p(\Omega)$.
We also denote $N^{1,p}(X)=N^{1,p}(X,\overline{\R})$.

The integral average of $f\colon \Omega\to Y$ over a ball $B\subset\Omega$ is defined by
\[
f_B:=\fint_B f\,d\mu:=\frac{1}{\mu(B)}\int_B f\  d\mu
\]
if the integral exists;
note that $f_B$ might not be in $Y$, but it is in the Banach space $V\supset Y$.

\begin{defn}[Space supporting Poincar\'e inequality]\label{def:poincare}
Let $1\le p<\infty$. The space $X$ is said to support a $p$-Poincar\' e
inequality if there exist constants $C_P>0$ and $ \lambda\ge1$ such that the following holds for
every pair of functions
$u\colon X\to \overline{\mathbb{R}}$ and
$g\colon X\to [0,\infty]$, where $u\in L^1(X)$
and $g$ is an upper gradient of $u$:
\begin{equation}\label{poincare}
\fint_{B_r(x)} |u-u_{B_r(x)}|\,d\mu\le C_Pr\left(\fint_{B_{\lambda r}(x)}g^p\, d\mu\right)^{\frac{1}{p}}
\end{equation}
for every ball $B_r(x)$.
\end{defn}

The Poincar\'e inequality is defined by considering extended real-valued functions,
but we will be interested in mappings. If $X$ supports a $p$-Poincar\' e inequality,
by \cite[Theorem 8.1.49]{HKST2} we know that \eqref{poincare} holds also when $u$ is replaced by a Banach-space
valued-mapping $f\colon X\to V$.

By $C\ge 1$ we will often denote a constant depending only on the doubling, Ahlfors regularity,
and Poincar\'e inequality constants,
and the value of $C$ may change at each occurrence.

We recall the following embedding theorem on spheres for Sobolev mappings, from \cite[Theorem 7.1]{HaKo}.
It is only proved for real-valued functions, but the proof works also for
Banach-space valued mappings, since the Lebesgue differentiation theorem holds also for such mappings, see
\cite[p. 77]{HKST2}.

\begin{thm}\label{embedding on spheres}
	Suppose $\mu$ is doubling and that $p>Q-1$, where $Q\ge 1$ is the lower mass bound from \eqref{lmb}.
	Let $\Omega\subset X$ be open, and 
	suppose that a continuous mapping $f\colon\Omega\to V$ and a nonnegative function
	$g$ in $\Omega$ satisfy the $p$-Poincar\'e inequality \eqref{poincare}
	for every ball $B\subset \lambda B\subset \Omega$.
	Consider a ball $B_{r_0}(x_0)\subset 5\lambda B_{ r_0}(x_0)\subset \Omega$.
	Then for almost all $r\in (0,r_0)$,
	$f$ is H\"older continuous on the sphere $\{x\in X\colon d(x,x_0)=r\}$.  Moreover, there exists a
	constant $C$ depending only on the doubling constant and the constants in the Poincar\'e inequality,
	and $r\in (\frac{r_0}{2},r_0)$ such that
	\[
	d_Y(f(x), f(y))\le C d(x,y)^{1-\frac{Q-1}{p}}r_0^{\frac{Q-1}{p}}
	\Big(\fint_{5\lambda B_{r_0}(x_0)}g^p
	\, d\mu\Big)^{\frac{1}{p}}
	\]
	whenever $d(x,x_0)=d(y,x_0)=r$.
\end{thm}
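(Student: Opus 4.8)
The plan is to lower the effective dimension by one: slice $\Omega$ along the spheres $S_r(x_0)=\{z:d(z,x_0)=r\}$, and then run a Morrey-type telescoping estimate in ``dimension'' $Q-1$. Since $p>Q-1$, this produces H\"older continuity with exponent $1-\tfrac{Q-1}{p}$. Throughout, the Banach-valued case is treated exactly as the real-valued one: the oscillation $d_Y(f(x),f(y))=|f(x)-f(y)|$ and the averages $f_B\in V$ obey the same Poincar\'e inequality \eqref{poincare} and the same Lebesgue differentiation theorem, so no separate argument is needed for $V$-valued $f$.

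First I would select a good radius. The radial function $\rho(z)=d(z,x_0)$ is $1$-Lipschitz, so $1$ is an upper gradient of $\rho$, and a coarea-type (slicing) inequality disintegrates $\mu$ over the level sets of $\rho$ on the annulus $\{\tfrac{r_0}{2}<\rho<r_0\}$, producing measures $\sigma_r$ on $S_r(x_0)$ with $\int h\,d\mu\gtrsim\int_{r_0/2}^{r_0}\big(\int_{S_r}h\,d\sigma_r\big)\,dr$ for nonnegative Borel $h$, normalized so that $\sigma_r$ is a codimension-one measure at scale $r_0$. Applying this with $h=g^p$ and using Chebyshev, I would extract $r\in(\tfrac{r_0}{2},r_0)$ with $\int_{S_r}g^p\,d\sigma_r\lesssim\frac{1}{r_0}\int_{5\lambda B_{r_0}(x_0)}g^p\,d\mu$; the same slicing shows that for almost every $r$ this integral is finite, which will yield the qualitative H\"older continuity on almost every sphere.

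Next I would transfer the hypotheses to the chosen sphere. Using the ambient $p$-Poincar\'e inequality on balls $B\subset\lambda B\subset\Omega$ meeting $S_r$, together with the continuity of $f$ and the slicing above, I would show that $(f|_{S_r},g|_{S_r})$ satisfies a $p$-Poincar\'e inequality on $(S_r,d,\sigma_r)$, and that $\sigma_r$ is doubling on $S_r$ with lower mass bound of exponent $Q-1$, inherited from the doubling of $\mu$ and the exponent $Q$ in \eqref{lmb}. Then the standard telescoping over the sphere-balls $S_r\cap B(x,2^{-i}d(x,y))$ applies: each annular Poincar\'e term is controlled by $2^{-i}d(x,y)\big(\fint_{S_r\cap\lambda B}g^p\,d\sigma_r\big)^{1/p}$, and the $(Q-1)$-dimensional lower mass bound converts the sum into a geometric series $\sum_i(2^{-i}d(x,y))^{1-\frac{Q-1}{p}}$ with prefactor $r_0^{(Q-1)/p}$, convergent precisely because $p>Q-1$. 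Summing, and inserting the bound on $\int_{S_r}g^p\,d\sigma_r$ from the first step together with the normalization $\sigma_r(S_r)\approx r_0^{-1}\mu(B_{r_0}(x_0))$ and doubling ($\mu(B_{r_0})\approx\mu(5\lambda B_{r_0})$), yields $d_Y(f(x),f(y))\le C\,d(x,y)^{1-\frac{Q-1}{p}}r_0^{\frac{Q-1}{p}}\big(\fint_{5\lambda B_{r_0}(x_0)}g^p\,d\mu\big)^{1/p}$.

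The hard part is the dimension reduction inside the third step: producing, for almost every radius, a codimension-one measure $\sigma_r$ for which both the coarea inequality and a $(Q-1)$-dimensional lower mass bound hold, and upgrading the ambient Poincar\'e inequality to a genuine Poincar\'e inequality (equivalently, a rich enough family of rectifiable curves) on the sphere itself — all under the sole assumption that $\mu$ is doubling, without Ahlfors regularity. Everything after that is a routine Morrey-type telescoping, and the power bookkeeping reproduces the stated exponents exactly.
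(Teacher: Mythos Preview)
The paper does not prove this statement; it is quoted from Haj{\l}asz--Koskela \cite[Theorem~7.1]{HaKo}, with the remark that the argument extends verbatim to Banach-valued targets. Compared against that proof, your outline has a genuine gap. The step you flag as the ``hard part'' --- constructing a codimension-one measure $\sigma_r$ on $S_r(x_0)$ that is doubling with lower mass exponent $Q-1$ and for which $(f|_{S_r},g|_{S_r})$ satisfies an intrinsic $p$-Poincar\'e inequality --- is not merely hard but unavailable under the stated hypotheses. Only $\mu$ is assumed doubling, and no Poincar\'e inequality is assumed on $X$ itself, only for the particular pair $(f,g)$. Spheres in a general doubling space need not carry rectifiable curves joining given pairs of points, so there is no reason $(S_r,d,\sigma_r)$ should support a Poincar\'e inequality, nor that $\sigma_r$ (even if produced by an abstract coarea inequality) should be doubling with exponent $Q-1$. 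Your plan therefore stalls exactly at the point you identify, and cannot be completed along these lines.

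The Haj{\l}asz--Koskela argument avoids this entirely by never leaving the ambient space. The telescoping uses \emph{ambient} balls $B(x,2^{-i}d(x,y))\subset X$ centered at $x,y\in S_r(x_0)$, together with the ambient Poincar\'e inequality for $(f,g)$ and the lower mass bound \eqref{lmb}; each such ball lies in the annulus $\{z:|d(z,x_0)-r|<2^{-i}d(x,y)\}$. The dimension reduction from $Q$ to $Q-1$ then comes from a one-dimensional maximal inequality: with $\Phi(t)=\int_{\{d(\cdot,x_0)<t\}\cap 5\lambda B_{r_0}(x_0)}g^p\,d\mu$, the annular mass is $\Phi(r+s)-\Phi(r-s)$, and the Hardy--Littlewood maximal function of the finite measure $d\Phi$ on $\R$ is finite for a.e.\ $r$ and, by the weak $(1,1)$ bound, at most $Cr_0^{-1}\int_{5\lambda B_{r_0}(x_0)}g^p\,d\mu$ for some $r\in(\tfrac{r_0}{2},r_0)$. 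This supplies the extra factor $s/r_0$ inside the telescoping sum that converts the Morrey exponent $1-Q/p$ into $1-(Q-1)/p$; no intrinsic structure on the sphere is required.
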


\begin{defn}[Cheeger differentiable structure]\label{differentiability}
A measurable differentiable structure on $(X,d,\mu)$ is a finite or countable collection of pairs $\{U_\alpha,x_\alpha\}_{\alpha\in I}$ called coordinate patches with the following properties: 

\begin{enumerate}[label=\text{(\roman*)},font=\normalfont,leftmargin=*]
\item  Each $U_\alpha$, $\alpha\in I$,  is a  $\mu$-measurable subset of $X$  with positive measure, and
the complement of $\bigcup_{\alpha\in I}U_\alpha$ has zero $\mu$-measure.

\smallskip
\item  Each $x_\alpha\colon X\to \R^{N(\alpha)}$, $\alpha\in I$,  is a Lipschitz map on $X$, with $N(\alpha)\in \N$ bounded above independently of $\alpha\in I$.

\smallskip
\item For every Lipschitz function $f\colon X\to \R$ and every $\alpha\in I$  there exists an $L^\infty$-map  $Df^\alpha\colon X\to \R^{N(\alpha)}$ such that for $\mu$-a.e.\ $x\in U_\alpha$ we have 
\begin{equation}\label{diff}
\lim_{y\to x} \frac{1}{d(x,y)}\bigg|  f(y) -  f(x)-Df^\alpha(x)\cdot (x_\al(y) -x_\alpha(x)) \bigg|=0.
\end{equation}
\end{enumerate} 
\end{defn}
For those $x\in X$ for which a $Df^\alpha(x)$ exists so that \eqref{diff} holds, we say
that $f$ is differentiable at $x$. Cheeger proved the existence of a measurable differentiable structure,
 and thus a Rademacher differentiation theorem for
Lipschitz functions on metric spaces with a doubling measure and
supporting a Poincar\'e inequality. The Rademacher
theorem was later extended by Cheeger and Kleiner \cite{CK}
to mappings into a Banach space satisfying the Radon-Nikodym property;
the Banach space $V$ satisfies the Radon-Nikodym property if every Lipschitz function $f\colon \mathbb{R}\to V$ is differentiable almost everywhere with respect to Lebesgue measure.
We recall the differentiability of such mappings below. 

\begin{defn}
Let $\{U_\alpha,x_\alpha\}_{\alpha\in I}$ be a measurable differentiable structure on
$(X,d,\mu)$. Given a measurable subset $S$ of $X$, a mapping
$f\colon X\to V$ is differentiable almost everywhere in $S$ if there exists a collection of measurable
functions
$\{\partial f/ \partial x_n^\alpha\colon  U_\alpha \cap S\to V\}_{\alpha\in I, n\in \{1,2,\cdots, N(\alpha)\}}$
such that
\[
\lim_{y\to x}\frac{1}{d(x,y)}\bigg|f(y)-f(x)-\sum_{n=1}^{N(\alpha)}
((x_n^\alpha(y)-x_n^\alpha(x))\frac{\partial f}{\partial x_n^\alpha}(x))\bigg|=0
\]
for each $\alpha\in I$ and for almost every point in $S\cap U_\alpha$, and the collection $\{\partial f/ \partial x_n^\alpha\}$ is determined uniquely up to sets of measure zero in $S$.
\end{defn}

\subsection{Absolutely continuous mappings}

We always consider $Q\ge 1$. Also recall that $\Omega\subset X$ is always an open set.
We will use the notation
\[
\underset{B}{\osc}\,f=\sup\{d_Y(f(x),f(y))\colon x,y\in B\}.
\]

\begin{defn}
A $\mu$-measurable mapping $f\colon \Omega\to Y$ is $Q$-absolutely continuous if for every $\varepsilon>0$
there is $\delta>0$ such that for each finite family of disjoint balls $\{B_i\}$
in $\Omega$ satisfying $\sum_i\mu(B_i)<\delta$, we have
\[
\sum_i \left(\underset{B_i}{\osc}\,f\right)^Q<\vep.
\]
We denote the collection of $Q$-absolutely continuous mappings on $\Omega$ by $AC^Q(\Omega;Y)$.
\end{defn}

The above then clearly holds also for countable collections $\{B_i\}$.

We say that $f\colon \Omega\to Y$ is absolutely continuous in measure
if $\mathcal{H}^Q(f(N))=0$ whenever $\mu(N)=0$.
We have that a
$Q$-absolutely continuous mapping is absolutely continuous in measure. 
\begin{lem}\label{abm}
Suppose $\mu$ is doubling, with homogeneous dimension $Q\ge 1$.
If $f\in AC^Q(\Omega;Y)$, then $\mu(N)=0$ implies $\mathcal{H}^Q(f(N))=0$.
\end{lem}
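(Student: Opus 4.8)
The plan is to reduce the claim to the Hausdorff \emph{content} and then cover $f(N)$ by images of a disjoint family of balls, on which $Q$-absolute continuity can be used directly. Recall that $\mathcal{H}^Q(E)=0$ as soon as, for every $\varepsilon>0$, there is a countable cover $\{A_j\}$ of $E$ with $\sum_j(\diam A_j)^Q<\varepsilon$: such a cover forces $\diam A_j<\varepsilon^{1/Q}$ for every $j$, hence is admissible for the content $\mathcal{H}^Q_R$ with $R=\varepsilon^{1/Q}$, giving $\mathcal{H}^Q_R(E)\le\varepsilon$, and letting $\varepsilon\to0$ yields $\mathcal{H}^Q(E)=0$. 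So I fix $\varepsilon>0$, take $\delta>0$ from the definition of $Q$-absolute continuity, and --- using $\mu(N)=0$ together with the fact that $\mu$ is a Borel regular outer measure --- choose an open set $U$ with $N\subset U\subset\Omega$, $U\neq X$, and $\mu(U)<\delta$. (That $U$ may be taken $\neq X$ is automatic once $\mu(U)<\mu(X)$; the degenerate case of very small total measure is harmless.)

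The heart of the matter is to cover $f(N)$ using only \emph{disjoint} balls, since the definition of $Q$-absolute continuity controls $\sum_i(\underset{B_i}{\osc}\,f)^Q$ only for disjoint families $\{B_i\}$. The obvious attempt --- covering $N$ by balls, extracting a disjoint subfamily $\{B_i\}$ by the $5r$-covering lemma so that $N\subset\bigcup_i 5B_i$, and bounding $\sum_i(\underset{5B_i}{\osc}\,f)^Q$ --- breaks down, because the dilated balls $5B_i$ may overlap with unbounded multiplicity and their oscillations cannot be recovered from those of the $B_i$. Instead I would cover the open set $U$ itself. By the standard Whitney covering of an open set in a doubling space, there is a countable family $\{B_j\}$ with $B_j\subset U$ that covers $U$, whose radii are comparable to $\dist(\cdot,X\setminus U)$, and which has bounded overlap; in particular any two intersecting balls have comparable radii, so each $B_j$ meets at most $C$ of the others. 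A graph of bounded degree $C$ is $(C+1)$-colourable, so $\{B_j\}$ splits into finitely many subfamilies $\mathcal{F}_1,\dots,\mathcal{F}_{C_0}$ (with $C_0=C+1$), each consisting of pairwise disjoint balls.

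Each $\mathcal{F}_l$ is a countable disjoint family of balls contained in $U$, so by disjointness $\sum_{B\in\mathcal{F}_l}\mu(B)=\mu\big(\bigcup_{B\in\mathcal{F}_l}B\big)\le\mu(U)<\delta$; applying $Q$-absolute continuity in its countable form gives $\sum_{B\in\mathcal{F}_l}(\underset{B}{\osc}\,f)^Q<\varepsilon$, and summing over the $C_0$ colours yields $\sum_j(\underset{B_j}{\osc}\,f)^Q<C_0\varepsilon$. Since $N\subset U\subset\bigcup_j B_j$ we have $f(N)\subset\bigcup_j f(B_j)$, and $\diam f(B_j)=\underset{B_j}{\osc}\,f$, so $\{f(B_j)\}$ is a countable cover of $f(N)$ with $\sum_j(\diam f(B_j))^Q<C_0\varepsilon$. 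As $\varepsilon>0$ was arbitrary, the content reduction of the first paragraph gives $\mathcal{H}^Q(f(N))=0$. The main obstacle is precisely the one addressed in the second paragraph: converting a disjoint-family oscillation bound into a genuine cover of the image without introducing an uncontrolled dilation factor, which the bounded-overlap Whitney covering of $U$ (rather than a $5r$-covering of $N$) resolves.
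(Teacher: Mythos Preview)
Your proof is correct, and it follows a genuinely different route from the paper's.

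The paper first covers $N$ by balls $B_{r_i}(x_i)\subset W$ in $X$ (via the $5r$-lemma in $X$, so these balls may overlap), then passes to the \emph{target}: it applies the $5r$-covering lemma in $Y$ to the balls $B_{\rho_i}(f(x_i))$, $\rho_i=\osc_{B_{r_i}(x_i)}f$, selecting a subfamily $I$ with $\{B_{\rho_i}(f(x_i))\}_{i\in I}$ disjoint and $f(N)\subset\bigcup_{i\in I}5B_{\rho_i}(f(x_i))$. Disjointness of the image balls forces disjointness of the corresponding domain balls $\{B_{r_i}(x_i)\}_{i\in I}$, so $Q$-absolute continuity yields $\sum_{i\in I}\rho_i^Q<\varepsilon$ and hence $\mathcal H^Q_{\varepsilon^{1/Q}}(f(N))\le 5^Q C_Q\varepsilon$. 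The paper even remarks afterward that a purely domain-side argument would seem to require a Besicovitch-type theorem, which may fail in general metric spaces (e.g.\ the Heisenberg group), and that this is why they applied the $5r$-lemma in $Y$.

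Your argument shows that remark is too pessimistic: by Whitney-covering the open set $U\supset N$ rather than $5r$-covering $N$ itself, you obtain bounded overlap in $X$ from doubling alone, and the graph-colouring splitting into $C_0$ disjoint subfamilies replaces the missing Besicovitch property. So your proof stays entirely on the domain side and uses only standard doubling-space machinery (indeed, the paper itself uses exactly this Whitney-plus-colouring device later, in the proof of Theorem~\ref{BVisSobolev}). The paper's approach avoids the colouring step and the Whitney construction, at the price of the trick of passing to $Y$; yours avoids that trick, at the price of a fixed structural constant $C_0$ in the estimate. Both yield the same conclusion.
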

\begin{proof}
Let $N\subset \Omega$ with $\mu(N)=0$. Since $f$ is $Q$-absolutely continuous in $\Omega$, there
exists $\delta>0$ such that for any countable collection of pairwise disjoint
balls $B_j \subset \Omega$ satisfying $\sum_j \mu(B_j)<\delta$, we have
\[
\sum_j (\underset{B_j}{\osc}\,f )^Q<\vep.
\]
Take an open set $W\subset \Omega$ containing $N$, with $\mu(W)<\delta/C_d^3$.
Then for every $x\in N$ there exists $r_x\in (0,1)$ such that $B_{5r_x}(x)\subset W$.
Consider the covering $\{B_{r_x}(x)\}_{x \in N}$.
By the $5$-covering theorem, see e.g. \cite[p. 60]{HKST2},
we can select an at most countable collection of balls
$B_{r_i}(x_i)\subset W$ such that $N\subset \bigcup_{i}B_{r_i}(x_i)$
and the balls $B_{r_i/5}(x_i)$ are disjoint. Thus also
\[
\sum_i \mu(B_{r_i}(x_i))\le \sum_i C_d^3\mu(B_{r_i/5}(x_i))\le C_d^3 \mu(W)<\delta.
\]
Let $\rho_i={\osc}\big(f(B_{r_i}(x_i)) \big)$. It follows that 
\[
f(B_{r_i}(x_i))\subset B_{\rho_i}(f(x_i)) \quad \text{and thus} 
\quad f(N)\subset\bigcup_{i} B_{\rho_i}(f(x_i)).
\]
Note that the numbers $\rho_i$ are necessarily uniformly bounded from above,
because otherwise $Q$-absolute continuity would be violated. 
Thus we can apply the $5$-covering theorem, and select an at most countable
collection of pairwise disjoint balls $\{B_{\rho_i}(f(x_i))\}_{i\in I}$ such that
 \[
f(N)\subset\bigcup_{i\in I} 5B_{\rho_i}(f(x_i)).
\]
It is clear that $B_{r_i}(x_i)\cap B_{r_j}(x_j)=\emptyset$ whenever $ B_{\rho_i}(f(x_i))\cap B_{\rho_j}(f(x_j))=\emptyset .$ Hence, we get a countable collection of pairwise disjoint balls
$\{B_{r_i}(x_i)\}_{i\in I}$ with
$\sum_{i\in I} \mu(B_{r_i}(x_i))<\delta.$ Denoting the constant appearing in the definition of
the Hausdorff measure by $C_Q$, we get
\[
\mathcal{H}_{\vep^{1/Q}}^Q(f(N))\le C_Q\sum_{i\in I} (5\rho_i)^Q
=5^Q C_Q \sum_{i\in I} (\underset{B_i}{\osc}\,f )^Q<5^Q C_Q \vep.
\]
Letting $\vep\to 0$, we get the result.
\end{proof}

\begin{remark}
	Note that when $X=\R^n$, we could deduce absolute continuity in measure from
	$Q$-absolute continuity by applying the Besicovitch covering theorem
	in the space $X$. In general metric spaces,
	for example in the first Heisenberg group equipped with the Kor\'anyi distance,
	we may not have such a covering theorem.
	Instead, we applied the $5$-covering theorem in the space $Y$.
\end{remark}

 Furthermore, if $Y$ is Ahlfors $Q$-regular, then a $Q$-absolutely continuous mapping $f$ satisfies the
 condition: if $\mu(N)=0$ then $\nu(f(N))=0.$

Besides $AC^Q$, we consider a more general class defined as follows. 
\begin{defn}\label{BVdef}
	Let $\Omega\subset X$ be an open set. Given a mapping $f\colon \Omega\to Y$,
	define
	\[
	V_Q(f,\Omega)=\sup \left\{\sum_{i} \left(\underset{B_i}{\osc}\,f\right)^Q\colon \{B_i\}
	\textrm{ is a finite family of disjoint balls in }\Omega\right\}.
	\]
	We define the seminorm
	\[
	\|f\|_{{\rm BV}^Q(\Omega;Y)}=\left(V_Q(f,\Omega)\right)^{1/Q}
	\]
	and say that $f$ is of bounded $Q$-variation if $\|f\|_{{\rm BV}^Q(\Omega;Y)}<\infty$,
	denoted by $f\in BV^Q(\Omega;Y)$. 
\end{defn}

We say that a mapping $f\colon \Omega\to Y$ satisfies a property locally if for every $x\in \Omega$,
there is $r>0$ such that the property holds in $B_r(x)\subset \Omega$.

\begin{lem}\label{locbdd}
Suppose $\mu$ is doubling and $X$ is connected.
Then every $f\in AC^Q_{\loc}(\Omega;Y)$ is continuous.
\end{lem}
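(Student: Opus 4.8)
The plan is to prove continuity at each point separately and to reduce the entire statement to the purely geometric fact that in a connected doubling space the measures of small balls shrink to zero. First I would fix $x_0\in\Omega$ and choose $r_1>0$ with $B_{r_1}(x_0)\subset\Omega$ on which $f$ is $Q$-absolutely continuous. Given $\varepsilon>0$, let $\delta>0$ be the threshold coming from the definition of $Q$-absolute continuity on $B_{r_1}(x_0)$. The key observation is that the definition may be applied to the \emph{single} ball $\{B_r(x_0)\}$ for $r<r_1$: if $\mu(B_r(x_0))<\delta$, then $(\underset{B_r(x_0)}{\osc}f)^Q<\varepsilon$, so that $d_Y(f(y),f(x_0))\le \underset{B_r(x_0)}{\osc}f<\varepsilon^{1/Q}$ for every $y\in B_r(x_0)$. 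Thus continuity of $f$ at $x_0$ follows at once, provided we know that $\mu(B_r(x_0))\to 0$ as $r\to 0$, since we may then select $r$ small enough that $\mu(B_r(x_0))<\delta$.

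It therefore remains to establish that $\mu(B_r(x_0))\to 0$ as $r\to 0$; this is where connectedness enters and is the main obstacle of the argument. If $X$ is a single point the claim is vacuous, so I would assume $X$ has at least two points and set $R_0:=\sup_{z\in X}d(x_0,z)>0$. Since $z\mapsto d(x_0,z)$ is continuous and $X$ is connected, its image is an interval containing $0$, hence contains $[0,R_0)$; thus for every $r$ with $\tfrac{3}{2}r<R_0$ there is a point $y$ with $d(x_0,y)=\tfrac{3}{2}r$. A direct estimate gives $B_{r/2}(y)\subset B_{2r}(x_0)\setminus B_r(x_0)$, and since $B_{2r}(x_0)\subset B_{4r}(y)$, three applications of the doubling property yield $\mu(B_{r/2}(y))\ge C_d^{-3}\mu(B_{2r}(x_0))$. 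As $B_r(x_0)$ and $B_{r/2}(y)$ are disjoint subsets of $B_{2r}(x_0)$, this produces the reverse doubling inequality
\[
\mu(B_r(x_0))\le \bigl(1-C_d^{-3}\bigr)\,\mu(B_{2r}(x_0))
\]
for all sufficiently small $r$.

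Finally I would iterate this inequality along the radii $r_0/2^k$ (for a fixed small $r_0$), obtaining $\mu(B_{r_0/2^k}(x_0))\le (1-C_d^{-3})^k\mu(B_{r_0}(x_0))\to 0$, and then use monotonicity of $\mu$ to upgrade this to $\mu(B_r(x_0))\to 0$ as $r\to 0$, which closes the argument. The only genuinely nontrivial step is the reverse doubling estimate: one must show that the annulus $B_{2r}(x_0)\setminus B_r(x_0)$ carries a definite fraction of the mass of $B_{2r}(x_0)$. This is exactly the standard reverse-doubling phenomenon, and it is where connectedness is indispensable, since it is precisely connectedness that guarantees the annulus is nonempty and contains a ball of comparable radius.
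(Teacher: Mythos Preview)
Your proposal is correct and follows essentially the same approach as the paper. Both arguments reduce continuity at a point to the fact that $\mu(B_r(x_0))\to 0$ as $r\to 0$ in a connected doubling space; the paper simply cites this fact from \cite[Corollary~3.9]{BjBj11}, whereas you supply a self-contained proof via the standard reverse-doubling argument.
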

\begin{proof}
	Suppose $f\in AC^Q_{\loc}(\Omega;Y)$ is not continuous at $x\in \Omega$.
	Then there exists $\vep>0$ such that for all $r>0$, we have
	$\underset{B_r(x)}{\osc}\,f>\vep$ for all $r>0$.
On the other hand, by the fact that
$X$ is connected, we have $\mu(B_r(x))\to 0$ as $r\to 0$, see \cite[Corollary 3.9]{BjBj11}.
This contradicts $f\in AC^Q_{\loc}(\Omega;Y)$.
\end{proof}

\begin{lem}\label{ACtoBV}
Suppose $\mu$ is doubling and $X$ is connected.
Then $AC_{\loc}^Q(\Omega;Y)\subset BV_{\loc}^Q(\Omega;Y)$.
\end{lem}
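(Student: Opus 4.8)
The plan is to prove the statement locally at each point, using the $\varepsilon$--$\delta$ formulation of $Q$-absolute continuity with a single fixed choice of $\varepsilon$, say $\varepsilon=1$. Fix $x\in\Omega$. Since $f\in AC^Q_{\loc}(\Omega;Y)$, there is $r_0>0$ with $B_{r_0}(x)\subset\Omega$ and $f\in AC^Q(B_{r_0}(x);Y)$. Applying the definition of $Q$-absolute continuity on $B_{r_0}(x)$ with $\varepsilon=1$, I obtain $\delta>0$ such that every finite family of pairwise disjoint balls $\{B_i\}$ contained in $B_{r_0}(x)$ with $\sum_i\mu(B_i)<\delta$ satisfies $\sum_i(\underset{B_i}{\osc}\,f)^Q<1$.

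Next I would shrink the radius so that the constraint $\sum_i\mu(B_i)<\delta$ becomes automatic. Here the connectedness hypothesis enters: exactly as in the proof of Lemma~\ref{locbdd}, because $X$ is connected and $\mu$ is doubling we have $\mu(B_r(x))\to0$ as $r\to0$, see \cite[Corollary 3.9]{BjBj11}. Hence I may choose $r\in(0,r_0)$ so small that $\mu(B_r(x))<\delta$.

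With this $r$ fixed, consider any finite family of pairwise disjoint balls $\{B_i\}$ in $B_r(x)$. Since the $B_i$ are disjoint and contained in $B_r(x)\subset B_{r_0}(x)$, their total measure satisfies $\sum_i\mu(B_i)\le\mu(B_r(x))<\delta$, and they form an admissible family for the $Q$-absolute continuity estimate on $B_{r_0}(x)$. Therefore $\sum_i(\underset{B_i}{\osc}\,f)^Q<1$. Taking the supremum over all such finite families yields $V_Q(f,B_r(x))\le1$, so that $f\in BV^Q(B_r(x);Y)$. As $x\in\Omega$ was arbitrary, this gives $f\in BV^Q_{\loc}(\Omega;Y)$.

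The argument is short, and the only genuinely structural ingredient is the vanishing of the measure of small balls in a connected doubling space; everything else is a direct unwinding of the definitions. I do not expect a serious obstacle, but the point to be careful about is that shrinking the ball turns the measure constraint in the definition of $Q$-absolute continuity into a property that holds for \emph{every} disjoint family inside $B_r(x)$ simultaneously, which is precisely what is needed to bound the total $Q$-variation $V_Q(f,B_r(x))$ rather than merely the sum over one prescribed family.
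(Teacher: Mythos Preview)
Your proof is correct and is in fact cleaner than the paper's. The paper fixes a ball $B_0$ on which $f$ is $Q$-absolutely continuous and locally bounded (via Lemma~\ref{locbdd}), and then, for an arbitrary finite disjoint family in $B_0$, splits the balls into those of measure at least $\delta$ (handled using the bound $\underset{B_i}{\osc}\,f\le 2\sup_{B_0}|f|$ and the fact that there are at most $\mu(B_0)/\delta$ of them) and the remaining small balls (grouped into roughly $\mu(B_0)/\delta$ subfamilies each of total measure below $2\delta$). Your approach sidesteps this case analysis entirely: by shrinking to a ball $B_r(x)$ with $\mu(B_r(x))<\delta$, every disjoint family is automatically admissible, and you get $V_Q(f,B_r(x))\le 1$ with no appeal to boundedness of $f$. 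The paper's argument does yield $BV^Q$ on the possibly larger ball $B_0$ rather than only on $B_r(x)$, but since the conclusion is a local one this is no real gain.
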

\begin{proof}
Let $f\in AC_{\loc}^Q(\Omega;Y)$ and let $x\in\Omega$.
By Lemma \ref{locbdd}, we find a ball $B_0\subset \Omega$ containing $x$
such that $\sup_{B_0}|f|<\infty$.
There exists
$\delta>0$ such that  $\sum_i (\diam f({B_i}))^Q<1$
if $\{B_i\}_i$ is a finite family of disjoint balls in $B_0$ with
$\sum_i\mu(B_i)<2\delta$. Let $n\in \mathbb{N}$
be such that $\mu(B_0)<n\delta.$
Consider an arbitrary finite collection of pairwise disjoint
balls $\{B_j\}$ in $B_0$.
Let $I_1$ consist of those indices $i$ for which $\mu(B_i)\ge \delta$, and $I_2$ consist of the
remaining indices.
Now $\sum_{i\in I_1} ({\osc}\,f({B_j}))^Q\le 2n\sup_{B_0}|f|<\infty$.
Divide the indices $I_2$ into at most $n$ subcollections
such that each subcollection satisfies $\sum_i\mu(B_i)<2\delta$.
Summing over all of these subcollections, we obtain
$\sum_{i\in I_2} ({\osc}\,f({B_j}))^Q<n$.
Thus $V_Q(f,B_0)\le 2n\sup_{B_0}|f|+n<\infty$.
\end{proof}

\begin{defn}\label{RRdef}
A mapping $f\colon \Omega\to Y$ is said to satisfy the Rado-Reichelderfer (RR) condition with exponent
$Q$ if there is a function $h\in L^1(\Omega)$ such that
\begin{equation}\label{RR}
\tag{RR}
\big(\underset{B_r(x)}{\osc}\,f\big)^Q\le \int_{B_r(x)} h\,d\mu
\end{equation}
for every ball $B_r(x)\subset \Omega$.
\end{defn}

A mapping satisfying the \eqref{RR} condition with exponent $Q$ is easily shown to
be $Q$-absolutely continuous, by using the absolute continuity of the integral. 

Given $x\in X$ and $r>0$, denote the sphere by
$S_r(x)=\{y\in X\colon d(y,x)=r\}$.
Note that in metric spaces, we always have $\partial B_r(x)\subset S_r(x)$,
where the inclusion can be strict.

\begin{defn}
We say that a mapping $f\colon \Omega \to Y$ is pseudomonotone if there exists a constant $C_m\ge 1$ such that
\[
\underset{B_r(x)}{\osc}\,f\le C_m\,\underset{S_r(x)}{\osc}\,f
\]
for each ball $B_r(x)\subset \overline{B_r(x)}\subset \Omega$.
\end{defn}

\emph{Throughout this paper we assume that $(X,d,\mu)$
	and $(Y,d_Y,\nu)$ are metric spaces equipped with Borel regular outer measures $\mu$ and $\nu$,
	and that $Y$ is separable. We always understand $Y$ to be embedded in a Banach space $V$.
}

\section{Sobolev regularity and almost everywhere differentiability of $BV^Q_\loc$}\label{BVloc}

\subsection{Sobolev regularity of $BV^Q_\loc$}

In this subsection, we always assume that $X$ is complete and $\mu$ is doubling, with
doubling constant $C_d\ge 1$.
Hence $X$ is separable and proper.
Moreover, here $\Omega\subset X$ is always a bounded open set.

We will show that a mapping $f\in BV^Q(\Omega;Y)$ belongs to the Dirichlet space $D^{Q}(\Omega;Y)$.
To achieve this, we construct locally Lipschitz mappings $f_\vep$
and their upper gradients $g_\vep$ such that $f_\vep\to f$ in $L_{\loc}^Q(\Omega;Y)$
and the $g_\vep$'s converge weakly to $g\in L^Q(\Omega)$,
from which it follows that $f\in D^Q(\Omega;Y)$.

We first recall a Lipschitz partition of unity of $\Omega$.
For any $\vep>0,$ consider a Whitney-type covering $\{B^{\vep}_i\}_i$
of $\Omega$, with $B^{\vep}_i=B_{r_{\vep,i}}(x_{\vep,i})$ where $r_{\vep,i}\le \vep$, and $4B^{\vep}_i\subset\Omega$.
If $4B^{\vep}_i\cap 4B^{\vep}_j\neq \emptyset$, then $r_{\vep,i}\le 2r_{\vep,j}$.
Moreover, there exists a constant $M=M(C_d)$ determined by the doubling constant
$C_d$ such that $1\le \sum_i\chi_{12B^{\vep}_i}(x)\le M$ for all $x\in \Omega$.
For such a construction, see e.g. \cite[Lemma 4.1.15]{HKST2}.

Define $\psi_i=\min\{\frac{1}{\vep}\dist(\cdot, X\setminus 2B_i^\vep), 1\}$ and
$\phi_i=\frac{\psi_i}{\sum_k \psi_k}$. We can verify that $\phi_i\colon X\to [0,1]$ satisfy

\begin{itemize}
\item[(i)] $\sum_i \phi_i(x)=1$ and the number of nonzero terms in this sum is bounded by the constant
$M$;
\item[(ii)] $\phi_i=0$ in $X\setminus 2B_i^\vep$; 
\item[(iii)] $\phi_i$ is $M/r_{\vep,i}$-Lipschitz continuous. 
\end{itemize}

Given $f\in L^1_{\loc}(\Omega;Y)$, we define the discrete convolution approximation
$f_\vep\colon \Omega\to V$ as
\[
f_\vep(x)=\sum_i f_{B_i^\vep}\phi_i(x),\quad x\in\Omega.
\]
Note that $f$ is assumed to take values in $Y$, but $f_{\vep}$ takes values in $V\supset Y$. 
The following basic properties hold for $f_\vep$:
\begin{itemize}
\item[(1)] $f_\vep\in \Lip_{\loc}(\Omega;V)$;
\item[(2)] $f_\vep\to f$ as $\vep\to 0$ uniformly on every compact subset of $\Omega$ if $f$ is continuous;
\item[(3)] if $f\in L^p(\Omega;Y)$, then $\|f_\vep\|_{L^p(\Omega;V)}\le C_0\|f\|_{L^p(\Omega;V)}$,
where $C_0\ge 1$ depends only on $C_d$.
\end{itemize}
The proofs for the above properties are similar to Lemma 8.1, Lemma 8.2 in \cite{HKST1}
and \cite[Lemma 5.3(2)]{HKT}.

\begin{lem}\label{lem:Lp convergence}
Let $f\in L^p_{\loc}(\Omega;V)$ for some $1\le p<\infty$.
Then $f_\vep\to f$ in $L_{\loc}^p(\Omega;V)$ as $\vep\to 0$.
\end{lem}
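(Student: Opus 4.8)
The plan is to use the standard ``two approximations'' scheme: since property~(2) already gives convergence for continuous mappings, I would deduce the general case by combining density of continuous mappings with a uniform-in-$\vep$ local $L^p$-bound for the operator $f\mapsto f_\vep$. It suffices to fix a relatively compact open set $\Omega'\Subset\Omega$ and prove $\|f_\vep-f\|_{L^p(\Omega';V)}\to 0$. First I would record the \emph{domain of dependence} of the discrete convolution: if $x\in\Omega'$ and $\phi_i(x)\neq 0$, then $x\in 2B_i^\vep$, so $d(x,x_{\vep,i})<2r_{\vep,i}\le 2\vep$ and hence $B_i^\vep\subset B_{3\vep}(x)$. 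Consequently $f_\vep|_{\Omega'}$ depends only on the restriction of $f$ to the $3\vep$-neighborhood of $\Omega'$, which for all small $\vep$ is contained in a fixed intermediate open set $\Omega''$ with $\Omega'\Subset\Omega''\Subset\Omega$ (using $\dist(\overline{\Omega'},X\setminus\Omega)>0$, valid since $X$ is proper).

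The key quantitative step, and the one I expect to carry the weight of the argument, is a \emph{localized} form of property~(3): for all small $\vep$,
\[
\|f_\vep\|_{L^p(\Omega';V)}\le C_0\,\|f\|_{L^p(\Omega'';V)},
\]
with $C_0$ independent of $\vep$. I would prove this directly. Since $\sum_i\phi_i\equiv 1$ with $\phi_i\ge 0$, convexity of $v\mapsto|v|^p$ gives $|f_\vep(x)|^p\le\sum_i\phi_i(x)\,|f_{B_i^\vep}|^p$, and a second application of Jensen to the average yields $|f_{B_i^\vep}|^p\le\fint_{B_i^\vep}|f|^p\,d\mu$. Integrating over $\Omega'$, estimating $\int\phi_i\,d\mu\le\mu(2B_i^\vep)\le C_d\,\mu(B_i^\vep)$ by doubling to cancel the averaging denominators, and then invoking the bounded overlap $\sum_i\chi_{B_i^\vep}\le\sum_i\chi_{12B_i^\vep}\le M$ of the Whitney balls (all contained in $\Omega''$ for the relevant indices $2B_i^\vep\cap\Omega'\neq\emptyset$), collapses the sum into $C_d M\int_{\Omega''}|f|^p\,d\mu$. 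This is exactly the localized bound; all of the geometry (doubling, bounded overlap, and the containment $B_i^\vep\subset\Omega''$) is consumed here.

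With the uniform bound in hand, I would finish by a three-epsilon argument. Given $\eta>0$, I would pick a continuous mapping $h$ with compact support in $\Omega''$, extended by $0$ to a continuous mapping on $\Omega$, such that $\|f-h\|_{L^p(\Omega'';V)}<\eta$; such $h$ exists by density of compactly supported continuous mappings in $L^p(\Omega'';V)$, which follows by approximating $f$ by simple mappings and each characteristic function by a truncated distance function, using that $\mu$ is a Radon measure on the proper space $X$. By linearity of the discrete convolution, $f_\vep-h_\vep=(f-h)_\vep$, so the localized bound gives $\|f_\vep-h_\vep\|_{L^p(\Omega';V)}\le C_0\,\|f-h\|_{L^p(\Omega'';V)}\le C_0\eta$. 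Since $h$ is continuous, property~(2) gives $h_\vep\to h$ uniformly on the compact set $\overline{\Omega'}$, and because $\mu(\overline{\Omega'})<\infty$ this upgrades to $\|h_\vep-h\|_{L^p(\Omega';V)}\to 0$. Combining with $\|h-f\|_{L^p(\Omega';V)}\le\|h-f\|_{L^p(\Omega'';V)}<\eta$ through the triangle inequality, I would obtain $\limsup_{\vep\to 0}\|f_\vep-f\|_{L^p(\Omega';V)}\le(C_0+1)\eta$, and letting $\eta\to 0$ completes the proof. The only points requiring care are the bookkeeping of the nested domains $\Omega'\Subset\Omega''\Subset\Omega$ and the density of continuous mappings in the Banach-valued $L^p$-space.
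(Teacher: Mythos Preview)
Your proposal is correct and follows essentially the same three-term scheme as the paper: approximate $f$ by a compactly supported continuous mapping (the paper cites \cite[Proposition~3.3.52]{HKST2}), use property~(2) for the continuous piece, and control the remaining terms via the uniform $L^p$-bound of property~(3) together with linearity $(f-\hat f)_\vep=f_\vep-\hat f_\vep$. The only cosmetic difference is that the paper first reduces to $f\in L^p(\Omega;V)$ by a cutoff and then quotes the global bound~(3), whereas you keep the nested domains $\Omega'\Subset\Omega''\Subset\Omega$ and re-derive the localized version of~(3) directly; your bookkeeping of the domain of dependence in fact makes explicit what the paper's one-line cutoff reduction leaves implicit.
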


\begin{proof}
By using a cutoff function, we can assume that $f\in L^p(\Omega;V)$.
Fix $\vep>0$. By \cite[Proposition 3.3.52]{HKST2}, there exists 
$\hat{f}\in C_c(\Omega;V)$ with $\Vert \hat{f}-f\Vert_{L^p(\Omega;V)}<\vep/(3C_0)$.
We can assume that $\mu(\supp \hat{f})>0$.
Denote the discrete convolution of $\hat{f}$ at scale $\delta>0$ by $\hat{f}_\delta$.
Denote the $\delta$-neighborhood of a set $A$ by $N_\delta(A)$.
By property (2), we find $\delta>0$ sufficiently small so that
\[
\Vert \hat{f}_\delta-\hat{f}\Vert_{L^{\infty}(\Omega;V)}<\frac{\vep}{3\cdot \mu(N_{3\delta}(\supp \hat{f}))^{1/p}},
\]
and so
\[
\Vert \hat{f}_\delta-\hat{f}\Vert_{L^{p}(\Omega;V)}\le 
\Vert \hat{f}_\delta-\hat{f}\Vert_{L^{\infty}(\Omega;V)}\mu(N_{3\delta}(\supp \hat{f}))^{1/p}<\vep/3.
\]
Using also property (3), we get
\begin{align*}
\Vert f_\delta-f\Vert_{L^p(\Omega;V)}
&\le \Vert f_\delta-\hat{f}_\delta\Vert_{L^p(\Omega;V)}+\Vert \hat{f}_\delta-\hat{f}\Vert_{L^p(\Omega;V)}+\Vert \hat{f}-f\Vert_{L^p(\Omega;V)}\\
&\le \Vert (f-\hat{f})_\delta\Vert_{L^p(\Omega;V)}+\vep/3+\Vert \hat{f}-f\Vert_{L^p(\Omega;V)}\\
&\le C_0\Vert f-\hat{f}\Vert_{L^p(\Omega;V)}+\vep/3+\Vert \hat{f}-f\Vert_{L^p(\Omega;V)}\\
&< \vep.
\end{align*}
\end{proof}

Next, define a function $g_\vep$ by
\begin{equation}\label{ug}
g_\vep(x)=\sum_i\frac{\diam f(4B_i^\vep)}{r_{\vep,i}}\chi_{B_i^\vep}(x),\quad x\in \Omega.
\end{equation}

Moreover, let $L(f_\vep, x,r)=\sup\{|f_\vep(x)-f_\vep(y)|\colon  d(x,y)\le r\}$,
and define the local Lipschitz constant by
\[
\begin{aligned}
\Lip f_\vep(x)&=\limsup_{r\to 0}\frac{L(f_\vep, x,r)}{r}.
\end{aligned}
\]

We have the following lemma.
\begin{lem}\label{cg}
There exists a constant $C$ depending on $C_d$ such that
$\Lip f_\vep\le  Cg_\vep$. In particular, $Cg_\vep$ is an upper gradient of $f_\vep$ in $\Omega$.
\end{lem}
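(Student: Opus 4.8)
The plan is to bound the pointwise Lipschitz constant $\Lip f_\vep(x)$ at an arbitrary $x\in\Omega$ by a single relevant summand of $g_\vep(x)$. Since $\{B_i^\vep\}_i$ covers $\Omega$, I first fix an index $j_0$ with $x\in B_{j_0}^\vep$; then $\phi_{j_0}(x)>0$, so $j_0$ is one of the indices for which the partition of unity is active at $x$. The key algebraic step is to exploit $\sum_i\phi_i\equiv 1$: because $\sum_i(\phi_i(y)-\phi_i(x))=0$, I may subtract the constant vector $f_{B_{j_0}^\vep}\in V$ and write
\[
f_\vep(y)-f_\vep(x)=\sum_i\big(f_{B_i^\vep}-f_{B_{j_0}^\vep}\big)\big(\phi_i(y)-\phi_i(x)\big).
\]
This converts the uncontrolled averages $f_{B_i^\vep}$ into differences of nearby averages, which are controlled by the oscillation of $f$.

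Next I would localize. For $y$ with $d(x,y)=r$, a term is nonzero only if one of $x,y$ lies in $2B_i^\vep$; letting $r\to0$ it suffices to treat indices $i$ with $x\in\overline{2B_i^\vep}$. By the bounded overlap property there are at most $M$ such indices, and since $2B_i^\vep\cap B_{j_0}^\vep\ni x$ forces $4B_i^\vep\cap 4B_{j_0}^\vep\neq\emptyset$, the Whitney relation gives $r_{\vep,i}\le 2r_{\vep,j_0}$ and $r_{\vep,j_0}\le 2r_{\vep,i}$, so all active balls have radius comparable to $r_{\vep,j_0}$ and lie in a fixed dilate $CB_{j_0}^\vep$. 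Using that $\phi_i$ is $M/r_{\vep,i}$-Lipschitz together with this comparability yields $|\phi_i(y)-\phi_i(x)|\le Cr/r_{\vep,j_0}$.

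The central estimate is $|f_{B_i^\vep}-f_{B_{j_0}^\vep}|\le \diam f(CB_{j_0}^\vep)$: each Bochner average lies in the closed convex hull of the corresponding values of $f$, and since both $B_i^\vep$ and $B_{j_0}^\vep$ lie in $CB_{j_0}^\vep$, both averages lie in the convex hull of $f(CB_{j_0}^\vep)$, whose diameter equals $\diam f(CB_{j_0}^\vep)$. Combining the three bounds and summing the at most $M$ active terms gives $L(f_\vep,x,r)\le CM\,r\,\diam f(CB_{j_0}^\vep)/r_{\vep,j_0}$, hence $\Lip f_\vep(x)\le C\,\diam f(CB_{j_0}^\vep)/r_{\vep,j_0}$, which is $\le Cg_\vep(x)$ since $x\in B_{j_0}^\vep$ makes the corresponding quantity a nonnegative summand of $g_\vep(x)$.

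I expect the main obstacle to be the bookkeeping of these dilation factors while staying inside $\Omega$. The clean containment produced by the Whitney relation is $B_i^\vep\subset CB_{j_0}^\vep$ with $C$ somewhat larger than the factor $4$ appearing in the definition of $g_\vep$, and an active ball $B_i^\vep$ need not contain $x$, so its own summand $\chi_{B_i^\vep}(x)$ may vanish. Resolving this requires using that the active balls have radii comparable to $r_{\vep,j_0}$ (so dividing by any of them is equivalent up to $C$), that $4B_i^\vep\subset\Omega$ keeps all enlarged oscillations well defined, and, when $r_{\vep,i}>r_{\vep,j_0}$, comparing instead through $B_{j_0}^\vep\subset 4B_i^\vep$; the discrepancy between the geometric constant and the factor $4$ is absorbed into $C$, as permitted by the statement. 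Finally, for the ``in particular'' clause I would invoke the standard fact that the pointwise Lipschitz constant of a locally Lipschitz map is an upper gradient; since $g_\vep$ is Borel and $Cg_\vep\ge\Lip f_\vep$ everywhere, and any Borel majorant of an upper gradient is again an upper gradient, $Cg_\vep$ is an upper gradient of $f_\vep$ in $\Omega$.
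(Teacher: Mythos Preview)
Your approach is essentially the paper's: fix $j$ with $x\in B_j^\vep$, use $\sum_i\phi_i\equiv 1$ to rewrite $f_\vep(y)-f_\vep(x)$ in terms of the differences $f_{B_i^\vep}-f_{B_j^\vep}$, restrict to $y\in B_j^\vep$ so that only the indices with $2B_i^\vep\cap B_j^\vep\neq\emptyset$ contribute (at most $M$ of them), exploit the Lipschitz bound for $\phi_i$ and the Whitney comparability of radii, and bound $|f_{B_i^\vep}-f_{B_j^\vep}|$ by $\diam f$ over a common enlarged ball centered at $x_j$. The paper carries this out in exactly the same order; it asserts $B_i^\vep\subset 4B_j^\vep$ and gets the bound $2M^2\diam f(4B_j^\vep)/r_{\vep,j}$.

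One caution about your bookkeeping paragraph: your proposed fix ``the discrepancy between the geometric constant and the factor $4$ is absorbed into $C$'' does not work as stated, because $\diam f(CB)$ is in general \emph{not} controlled by any fixed multiple of $\diam f(4B)$ (no regularity of $f$ is assumed). The same numerical slip is arguably present in the paper's assertion $B_i^\vep\subset 4B_j^\vep$ (from $2B_i^\vep\cap B_j^\vep\neq\emptyset$ and $r_{\vep,i}\le 2r_{\vep,j}$ one only gets $B_i^\vep\subset 7B_j^\vep$). The honest repair is not to enlarge $C$ but to enlarge the dilation factor in the \emph{definition} of $g_\vep$ (e.g.\ use $12B_i^\vep$, consistent with the stated overlap bound and still contained in $\Omega$). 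This is harmless for the only use of $g_\vep$, namely the $L^Q$-estimate in the proof of Theorem~\ref{BVisSobolev}, where only $\mu(2B_i^\vep)\asymp r_{\vep,i}^Q$ is needed.
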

\begin{proof}
Let $x\in \Omega$. Choose a ball $B_j^\vep$ containing $x$.
Let $I$ denote the collection of indexes for which
$2B_i^\vep\cap B_j^{\vep}\neq \emptyset$ for $i\in I$.
Then ${\rm card}(I)\le M$. 
Now for every $y\in B_j^{\vep}$,
\[\sum_i (\phi_i(x)-\phi_i(y))f_{2B_i^\vep}=\sum_{i\in I} (\phi_i(x)-\phi_i(y))f_{2B_i^\vep},
\]
and $B_i^\vep\subset 4B_j^\vep$ if $i\in I.$

It follows that
\[
\begin{aligned}
|f_\vep(x)-f_\vep(y)|
&=|f_\vep(x)-f_\vep(y)-f_{B_j}\sum_i(\phi_i(x)-\phi_i(y))|\\
&=|\sum_{i\in I} (\phi_i(x)-\phi_i(y))(f_{B_i}-f_{B_j})|\\
&\le \sum_{i\in I} |\phi_i(x)-\phi_i(y)||f_{B_i}-f_{B_j}|\\
&\le \frac{2M^2}{r_{\vep,j}}d(x,y)\diam f(4B_j^\vep),
\end{aligned}
\]
where the last inequality follows from the fact that $\phi_i$ is $M/r_{\vep,i}$-Lipschitz
and thus $2M/r_{\vep,j}$-Lipschitz,
and
${\rm card}(I)\le M$. Hence, we obtain
\[
\limsup_{r\to 0}\frac{L(f_\vep, x,r)}{r}\le \frac{2M^2}{r_{\vep,j}} \diam f(4B_j^\vep)\le Cg_\vep (x).
\]
\end{proof}

\begin{proof}[Proof of Theorem \ref{BVisSobolev}]
Let $f\in BV^Q(\Omega;Y)$.
From the definition of $BV^Q(\Omega;Y)$, we see that
$f\in L_{\loc}^{\infty}(\Omega;Y)\subset L_{\loc}^Q(\Omega;Y)$.
By Lemma \ref{lem:Lp convergence}, $f_\vep\to f$ in $L_{\loc}^Q(\Omega;V)$ as $\vep\to 0$.
Recall that $\sum_i \chi_{12B_i^\vep}(x)\le M$,
and that if $4B^{\vep}_i\cap 4B^{\vep}_j\neq \emptyset$, then $r_{\vep,i}\le 2r_{\vep,j}$.
Thus we can divide the collection of balls
$\{4B_i^\vep\}_{i}$ into at most $M+1$ collections of disjoint balls
$\{4B_{i}^\vep\}_{i\in I_1},\ldots,\{4B_{i}^\vep\}_{i\in I_{M+1}}$.
Otherwise, there would be a ball $4B_i^\vep$ whose center is in $M+1$ balls
$4B_i^\vep$.
Defining $g_\vep$ as in \eqref{ug},
we have by the triangle inequality, and then using Ahlfors $Q$-regularity,
\[
\begin{aligned}
&\left(\int_{\Omega} g^Q_\vep\,d\mu\right)^{1/Q}
= \left(\int_{\Omega}\Bigg(\sum_{i} \frac{\diam f(4B_{i}^\vep)}{r_{\vep,i}}\chi_{2B_{i}^\vep}\Bigg)^Q\,d\mu\right)^{1/Q}\\
&\quad \le \left(\sum_{i\in I_1}\Bigg( \frac{\diam f(4B_{i}^\vep)}{r_{\vep,i}}\Bigg)^Q\mu(2B_{i}^\vep)\right)^{1/Q}
+\ldots+ \left(\sum_{i\in I_{M+1}}\Bigg( \frac{\diam f(4B_{i}^\vep)}{r_{\vep,i}}\Bigg)^Q\mu(2B_{i}^\vep)\right)^{1/Q}\\
&\quad \le  C\left(\sum_{i\in I_1}(\diam f(4B_{i}^\vep))^Q\right)^{1/Q}
+\ldots+ C\left(\sum_{i\in I_{M+1}}(\diam f(4B_{i}^\vep))^Q\right)^{1/Q}\\
&\quad \le (M+1) C V_Q(f,\Omega)<\infty.
\end{aligned}
\]
Then by reflexivity, there exists a $g\in L^Q(\Omega)$ such that $g_\vep\to g$ weakly in $L^Q(\Omega)$.
Combining this with the fact that $f_\vep\to f$ in $ L_{\loc}^Q(\Omega)$, and the fact that $Cg_\vep$ is an
upper gradient of $f_\vep$ in $\Omega$ by
Lemma \ref{cg}, we deduce that $Cg$ is a $Q$-weak upper gradient of $f$ in $\Omega$ by
choosing an appropriate $\mu$-representative of $f$ \cite[Theorem 7.3.8]{HKST2}.
Denoting this representative still by $f$, we have $f\in D^{Q}(\Omega;Y)$, and the proof is complete.
\end{proof}

\subsection{Almost everywhere differentiability of $BV^Q_\loc$}

We recall that a Banach space $V$ satisfies the Radon-Nikodym property if every Lipschitz function $f\colon \mathbb{R}\to V$ is differentiable almost everywhere with respect to Lebesgue measure. Wildrick and Z\"urcher \cite[Theorem 4.2]{WZ} proved the following Stepanov differentiability theorem. 
\begin{thm}\label{thm:Stepanov}
Let $(X,d,\mu)$ be a complete metric measure space with $\mu$ doubling. Let $V$ be a Banach space with the Radon-Nikodym property. Assume there is a measurable
differentiable structure $\{(U_\alpha, x_\alpha)\}$ on $(X,d,\mu)$. Then a measurable function
$f\colon X\to V$ is $\mu$-a.e. differentiable in $S(f)=\{x\in X\colon \Lip f(x)<\infty\}$
with respect to $\{(U_\alpha, x_\alpha)\}$.

\end{thm}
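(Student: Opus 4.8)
The plan is to deduce this Stepanov-type statement from the Rademacher theorem of Cheeger and Kleiner \cite{CK}, which applies precisely because $V$ has the Radon--Nikodym property: every Lipschitz map $F\colon X\to V$ is $\mu$-a.e.\ differentiable with respect to $\{(U_\alpha,x_\alpha)\}$. The first step is to decompose $S(f)$ into countably many pieces on which $f$ is genuinely Lipschitz. For each $m\in\N$ set
\[
A_m=\{x\in X\colon |f(y)-f(x)|\le m\,d(x,y)\text{ whenever }d(x,y)\le 1/m\}.
\]
If $\Lip f(x)<\infty$ then $x\in A_m$ for all large $m$, so $S(f)\subset\bigcup_m A_m$. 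Since $X$ is separable, each $A_m$ can be covered by countably many balls of radius $<1/(2m)$; intersecting, we obtain countably many Borel sets $E_j$ with $\diam E_j<1/m_j$ and $\bigcup_j E_j\supset S(f)$, and on each $E_j$ any two points lie within distance $1/m_j$, so $f|_{E_j}$ is $m_j$-Lipschitz.

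Next I would extend each $f|_{E_j}$ to a Lipschitz map $F_j\colon X\to V$. This can be done by a Whitney-type construction: take a Whitney cover of $X\setminus E_j$ with a subordinate Lipschitz partition of unity $\{\phi_i\}$, choose $p_i\in E_j$ near the $i$-th Whitney ball, and set $F_j=f$ on $E_j$ and $F_j=\sum_i\phi_i f(p_i)$ off $E_j$; the values stay in $V$ because they are locally finite convex combinations of points of $V$, and the Lipschitz constant is controlled by $m_j$ times a constant depending only on $C_d$. By the Cheeger--Kleiner theorem, $F_j$ is differentiable at $\mu$-a.e.\ point. Fix $j$ and let $x_0\in E_j$ be simultaneously a point of density $1$ for $E_j$ (a.e.\ point is such, by the Lebesgue density theorem for doubling measures) and a differentiability point of $F_j$ in the patch $U_\alpha$ containing it; note $f(x_0)=F_j(x_0)$. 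I claim $f$ is differentiable at $x_0$ with the same differential $L:=DF_j^\alpha(x_0)$.

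The heart of the argument is to promote the differentiability limit from $y\in E_j$ to all $y\to x_0$. Given $y$ with $r:=d(x_0,y)$ small, the density of $E_j$ at $x_0$ together with the lower mass bound \eqref{lmb} forces a point $y'\in E_j$ close to $y$: if $B_{\eta r}(y)$ missed $E_j$ then $\mu(B_{\eta r}(y))\le\mu(B_{2r}(x_0)\setminus E_j)$, while \eqref{lmb} gives $\mu(B_{\eta r}(y))\ge C^{-1}(\eta/2)^Q\mu(B_{2r}(x_0))$; since the density deficit $\sigma(r):=\mu(B_{2r}(x_0)\setminus E_j)/\mu(B_{2r}(x_0))\to0$, one may choose $y'\in E_j$ with $d(y,y')\le\eta(r)\,r$ and $\eta(r)\sim\sigma(r)^{1/Q}\to0$. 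Inserting $y'$ into
\[
f(y)-f(x_0)-L\cdot(x_\alpha(y)-x_\alpha(x_0)),
\]
the term $f(y)-f(y')$ is bounded by $m_j\,d(y,y')$ since $y'\in A_{m_j}$, the term involving $y'$ is $o(r)$ by differentiability of $F_j$, and the remaining linear term is controlled by $|L|\,\Lip(x_\alpha)\,d(y,y')$; dividing by $r$ and letting $y\to x_0$ makes every contribution vanish because $\eta(r)\to0$. Thus $f$ is differentiable at $x_0$, and since $\mu$-a.e.\ point of each $E_j$ is of this type and $S(f)\subset\bigcup_j E_j$, differentiability holds $\mu$-a.e.\ in $S(f)$. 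I expect this last density step to be the main obstacle: unlike the scalar Stepanov theorem, which can be settled by sup/inf-convolution sandwiches, the vector-valued case has no order structure, so controlling $f$ at the exceptional points $y\notin E_j$ must be carried out through the measure-theoretic gap estimate above, which is exactly where the doubling hypothesis enters.
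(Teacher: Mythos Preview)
The paper does not prove this theorem; it is quoted from Wildrick--Z\"urcher \cite[Theorem~4.2]{WZ} and used as a black box in the proof of Proposition~\ref{aediff}. There is therefore no proof in the paper to compare against.

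Your outline is the standard Stepanov-from-Rademacher reduction and is sound in structure; this is also, in essence, the route taken in \cite{WZ}. Two technical points would need attention in a complete write-up. First, the sets $A_m$ are not obviously Borel for a merely measurable $f$; one typically passes to closed subsets (via inner regularity of $\mu$, or by first restricting to a Lusin set on which $f$ is continuous) before running the Whitney extension, which in any case wants a closed domain. Second, you invoke the Cheeger--Kleiner Rademacher theorem \cite{CK}, but as recorded in the paper that result is for PI spaces, whereas Theorem~\ref{thm:Stepanov} assumes only the existence of a measurable differentiable structure---a strictly weaker hypothesis. The passage from scalar differentiability (which is built into Definition~\ref{differentiability}(iii)) to differentiability of Lipschitz maps into RNP targets under this weaker hypothesis is not automatic and is handled with some care in \cite{WZ}. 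Neither point undermines your strategy, but both should be addressed for the argument to be complete.
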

Hence, to complete the proof of almost everywhere Cheeger differentiability of mappings
$f\in BV^Q(\Omega;V)$ for $V$ with Radon-Nikodym property, it suffices to show that the local Lipschitz constant $\Lip f$
is finite $\mu$-almost
everywhere. The argument  in $\mathbb{R}^n$ by Mal\'y \cite[Theorem 3.3]{Maly}
can be extended to metric spaces as below.

\begin{proof}[{Proof of Proposition \ref{aediff}}]
As before, for $L(f, x,r)=\sup\{|f(x)-f(y)|\colon d(x,y)\le r\}$, we have
\[
\begin{aligned}
\Lip f(x)&=\limsup_{r\to 0}\frac{L(f, x,r)}{r}.
\end{aligned}
\]
We will show that $\int_\Omega \left(\Lip f\right)^Qd\mu \le CV_Q(f,\Omega).$
Note that by the definition of the Lebesgue integral, we have
\[
\int_\Omega \left(\Lip f\right)^Q\, d\mu=\sup\left\{\int_\Omega g\, d\mu\colon \text{$g$ is a simple function and } 0\le g\le (\Lip f)^Q\chi_\Omega\right\}.
\]
Simple functions are finite sums $\sum_i a_i\chi_{E_i}$, with $a_i\ge 0$,
and by an inner regularity property of the Borel regular outer measure $\mu$
we can assume that the sets $E_i$ are closed, see e.g. \cite[Proposition 3.3.37]{HKST2}.
We conclude that it suffices to consider upper semicontinuous functions $g$.
Assume $g$ is such a function and  let $E=\{g>0\}\subset \Omega$. For $x\in E$, denote
by $k(x)$ the integer such that $$2^{k(x)}\le g(x)<2^{k(x)+1}.$$ By the fact that $g$
is upper semicontinuous and $g(x)\le (\Lip f(x))^Q$,
for each $x\in E$ we can find a radius $r=r(x)$ small enough  such that $B_r(x)\subset\Omega$,
\[
B_{5r}(x)\subset \{g<2^{k(x)+1}\},
\]
and 
\[
2^{k(x)}r^Q\le g(x)r^Q< 2L(f,x,r)^Q<  2\diam f(B_r(x))^Q.
\]

By the $5$-covering theorem, see e.g. \cite[p. 60]{HKST2},
from the collection $\left\{ B_{r(x)}(x)\right\}_{x\in E}$
we can select an at most countable disjoint subcollection $\{B_i=B_{r_i} (x_i)\}_i$ such that
\[
E\subset \bigcup_i 5B_i.
\]
Let $I_k=\{i\colon k(x_i)\ge k\}$ and
\[
E_k=\{x\in E\colon 2^k\le g(x)<2^{k+1}\}.
\]
Since we chose $r(x_i)$ such that $B_{5r_i}(x_i)\subset \{g<2^{k(x_i)+1}\} $,
we know that $E_k\subset \underset{i\in I_k} \bigcup 5B_i$. Using also Ahlfors
$Q$-regularity, it follows that
\[
\mu(E_k)\le \sum_{i\in I_k} \mu(5B_i)\le C\sum_{i\in I_k} r_i^Q
\le C\sum_{i\in I_k} 2^{-k(x_i)}\diam f(B_{r_i}(x_i))^Q.
\]
Hence, we have
\[
\begin{aligned}
\int_\Omega g\, d\mu&=\sum_{k=-\infty}^{\infty} \int_{E_k} g\, d\mu\\
&\le\sum_{k=-\infty}^{\infty} 2^{k+1}\mu(E_k)\\
&\le C\sum_{k=-\infty}^{\infty}\sum_{i\in I_k} 2^{k+1-k(x_i)}\diam f(B_{r_i}(x_i))^Q\\
&= C\sum_{i}\sum_{k\le k(x_i)} 2^{k+1-k(x_i)}\diam f(B_{r_i}(x_i))^Q\\
&\le CV_Q(f,\Omega)<\infty.
\end{aligned}
\]
We deduce that $\Lip f\in L^Q(\Omega)$.
From Theorem \ref{thm:Stepanov}
we deduce that $f$ is differentiable almost everywhere with
respect to the given measurable differentiable structure. 
\end{proof}

\section{Absolute continuity of Sobolev mappings}\label{Sobo}

In the previous section we considered properties of the classes $AC^Q$ and $BV^Q$.
In this section we consider the converse question of when a mapping can be shown to be
an $AC^Q$-mapping.
We show this for Sobolev mappings $N^{1,p}$ in three cases:
when $p>Q$, when $p=Q$ and the mapping is pseudomonotone, and finally when
the Sobolev property of the mapping is obtained from a suitable quasiconformality condition.

We define the non-centered Hardy--Littlewood maximal function of a locally integrable nonnegative function
$g\in L^1_{\loc}(X)$ by
\[
\mathcal Mg(x):=\sup_{B\ni x}\,\fint_{B}g\,d\mu,\quad x\in X,
\]
where the supremum is taken over all balls containing $x$.

\subsection{Sobolev mapping $f\in N^{1,p}$ with $p>Q$ satisfies \eqref{RR}}

We show that a Sobolev mapping $f\in N^{1,p}(\Omega;Y)$ with $p>Q$  satisfies
the Rado-Reichelderfer condition \eqref{RR}
of Definition \ref{RRdef}, which implies $Q$-absolute continuity.

\begin{proof}[{Proof of Theorem \ref{pgq}}]
By Keith--Zhong, see \cite[Theorem 5.1]{BjBj} as well as \cite[Theorem 1.0.1]{KeZh}
for the original result in complete spaces,
$X$ supports a $q$-Poincar\'e inequality for some  $q\in (Q,p)$.
Let $f\in N^{1,p}(\Omega;V)\subset N^{1,q}(\Omega;V)$ and let
$g\in L^p(\Omega)$ be an
upper gradient of $f$ in $\Omega$. The Sobolev embedding theorem
(see e.g. \cite[Theorem 9.1.36(iii)]{HKST2})
states that $f$ is locally H\"older continuous on $\Omega$ by a suitable choice of representative,
which we still denote by $f$.
The Sobolev embedding theorem also states for some constant $\sigma\ge 1$, we have
\[
\underset{B_r(x)}{\diam}\,f
\le Cr\left(\fint_{ B_{\sigma r}(x)}g^q\, d\mu\right)^{\frac{1}{q}}
\]
for every ball $B_r(x)\subset B_{\sigma r}(x)\subset\Omega$.
Consider a fixed ball $B_0=B_{R_0}(x_0)$ with
$B_0\subset 3\sigma B_0\subset \Omega$.
Then consider a ball $B_{r}(x)\subset B_0$; note that $B_{\sigma r}(p)\subset \Omega$.
First note that by \eqref{lmb}, we have
\begin{equation}\label{lmb2}
\frac{\mu(B_r(x))}{\mu(B_0)}\ge \frac{1}{C}\left(\frac{r}{R_0}\right)^Q.
\end{equation}
Let $\rho$ be the zero extension of $g^q$ from $\Omega$ to the whole space.
Now for all $z\in B_r(x)$,
\[
\left(\underset{B_r(x)}{\diam}\,f\right)^q
\le Cr^q \fint_{ B_{\sigma r}(x)}g^q\, d\mu
\le  Cr^q \mathcal{M}\rho(z).
\]
Integrating both sides with respect to $z$ over $B_r(x)$, by \eqref{lmb2} we obtain
\[
\left(\underset{B_r(x)}{\osc}\,f\right)^q \le \frac{Cr^{q}}{\mu(B_r(x))} \int_{B_r(x)}\mathcal{M}\rho \, d\mu
\le Cr^{q-Q}\frac{R_0^Q}{\mu(B_0)}\int_{B_r(x)}\mathcal{M}\rho \, d\mu.
\]
Finally, taking power $\frac{Q}{q}$ on  both sides and applying Young's inequality, it follows that 
\[
\begin{aligned}
\left(\underset{B_r(x)}{\osc}\,f\right)^Q
& \le C \left(\frac{R_0^Q}{\mu(B_0)}\right)^{Q/q} r^{Q(1-\frac{Q}{q})}\left( \int_{B_r(x)}\mathcal{M}\rho \, d\mu\right)^{\frac{Q}{q}}\\
&\le C \left(\frac{R_0^Q}{\mu(B_0)}\right)^{Q/q} \left(r^Q+\int_{B_r(x)}\mathcal{M}\rho \, d\mu\right)\\
&\le C \left(\frac{R_0^Q}{\mu(B_0)}\right)^{Q/q} \int_{B_r(x)}
\Bigg[\frac{R_0^Q}{\mu(B_0)}+\mathcal{M}\rho \Bigg]\, d\mu
\end{aligned}
\]
again by \eqref{lmb2}.
Since $\rho \in L^{\frac{p}{q}}(X)$ with $\frac{p}{q}>1$, the Hardy-Littlewood
theorem implies
that $\mathcal{M} \rho\in L^{\frac{p}{q}}(X)\subset L^{\frac{p}{q}}(B_0)\subset L^1(B_0)$,
see e.g. \cite[Theorem 3.5.6]{HKST2}.
Let $h=1+\mathcal{M}\rho \in L^1(B_0)$.
Thus $f$ satisfies the condition \eqref{RR} locally with exponent $Q$.
As a result, $f$ is locally $Q$-absolutely continuous in $\Omega$.

Since $X$ is separable, we can cover $\Omega$ by countably many balls
$B_i\subset 3\lambda B_i\subset \Omega$,
and applying Lemma
\ref{abm} in each $B_i$, we obtain absolute continuity in measure on $\Omega$.
\end{proof}
\subsection{Continuous pseudomonotone Sobolev mapping $f\in N^{1,Q}$ satisfies \eqref{RR}}


\begin{proof}[{Proof of Theorem \ref{main}}]
	There is $p\in (Q-1,Q)$ such that $X$ supports a $p$-Poincar\'e inequality, see
	\cite[Theorem 5.1]{BjBj}.
Consider a fixed ball $B_0=B_{R_0}(x_0)$ with
$B_0\subset 21\sigma B_0\subset \Omega$, where $\sigma$ is the dilation factor
from the $p$-Poincar\'e inequality.
Then consider a ball $B_{r}(x)\subset B_0$.
First note that by \eqref{lmb}, we have
\begin{equation}\label{lmb3}
\frac{\mu(B_r(x))}{\mu(B_0)}\ge \frac{1}{C}\left(\frac{r}{r_0}\right)^Q.
\end{equation}
By the Sobolev embedding theorem on spheres, Theorem \ref{embedding on spheres},
for $f$ and its upper gradient $g\in L^Q(\Omega)$
we have for some $R\in (r, 2r)$
\[
\big(\underset{S_R(x)}{\osc}\,f\big)^p\le C\frac{r^{p}}{\mu(B_{2r}(x))}\int_{10\sigma B_{r}(x)}
g^p \, d\mu.
\]
Let $\rho$ be the zero extension of $g^p$ from $\Omega$ to the whole space.
Since $f$ is pseudomonotone, we have
\[
\begin{aligned}
\big(\underset{B_r(x)}{\osc}\,f\big)^p&\le \big(\underset{B_R(x)}{\osc}\,f\big)^p\\
&\le C_m^p\big(\underset{S_R(x)}{\osc}\,f\big)^p\\
&\le CC_m^p\frac{r^{p}}{\mu(B_{2r}(x))}\int_{10\sigma B_r(x)} g^p\, d\mu\\
&\le Cr^{p}\mathcal{M} \rho(z)
\end{aligned}
\]
for each $z\in B_r(x)$.
Integrating both sides with respect to $z$ over $B_r(x)$ and applying H\"older's inequality
as well as \eqref{lmb3}, we get
\[
\begin{aligned}
\big(\underset{B_r(x)}{\osc}\,f\big)^p& \le \frac{Cr^{p}}{\mu(B_r(x))} \int_{B_r(x)}\mathcal{M}\rho \, d\mu\\
&\le C\frac{R_0^p}{\mu(B_0)^{p/Q}}\Big(\int_{B_r(x)}(\mathcal{M}\rho)^{\frac{Q}{p}}\, d\mu\Big)^{\frac{p}{Q}}.
\end{aligned}
\]
The fact that $\frac{Q}{p}>1$ and the Hardy-Littlewood theorem imply that
$h=(\mathcal{M}\rho)^{\frac{Q}{p}}\in L^1(X)$, and now we have
\[
\big(\underset{B_r(x)}{\osc}\,f\big)^Q\le C\frac{R_0^Q}{\mu(B_0)}\|h \|_{L^{1}(B_r(x))}.
\]
Thus $f$ satisfies the condition \eqref{RR} with exponent $Q$ in the ball $B_0$, and thus locally in $\Omega$.
The local $Q$-absolute continuity follows, and then the absolute continuity in measure on $\Omega$ again
follows.
\end{proof}

\subsection{Absolute continuity of quasiconformal mappings}
Given a mapping $f\colon \Omega\to Y$, we define  for every $x\in \Omega$ and $r>0$
\[
L_f(x,r):=\sup\{d_Y(f(y),f(x))\colon d(y,x)\le r\},
\]
\[
l_f(x,r):=\inf\{d_Y(f(y),f(x))\colon d(y,x)\ge r\},
\]
and 
\[
H_f(x,r):=\frac{L_f(x,r)}{l_f(x,r)}.
\]
We interpret this to be $\infty$ if the denominator is zero.
We say that a homeomorphism $f\colon \Omega\to f(\Omega)\subset Y$ is quasiconformal if there is a number
$1\le H<\infty$ such that
\[
H_f(x):=\limsup_{r\to 0} H_f(x,r)\le H
\]
for all $x\in \Omega$. In Euclidean spaces, it is proved by Gehring \cite{Ge1,Ge2}
that for an exceptional set $E$ of
$\sigma$-finite $(n-1)$-dimensional Hausdorff measure and another
exceptional set $F$ of zero Lebesgue measure,
in the definition it suffices to assume $H_f(x)< \infty$ for all $x\in \Omega\setminus E$ and
$H_f(x)\le H<\infty$ for all $x\in \Omega\setminus F$. Heinonen and Koskela \cite{HK1}
show that $H_f(x)$ in the definition can be relaxed to 
\begin{equation}\label{hf}
h_f(x):=\liminf_{r\to 0} H_f(x,r)\le H.
\end{equation}
Moreover, Kallunki and Koskela \cite{KaKo} show that
similar exceptional sets are also admissible with the relaxed limit $h_f$.

The seminal work of Heinonen and Koskela \cite{HK2} develops the foundation of the theory of
quasiconformal mappings in metric spaces that satisfy certain bounds on their mass and geometry.
Mappings $f\colon X\to Y$ between two locally Ahlfors $Q$-regular metric measure spaces
supporting a $Q$-Poincar\'e inequality are carefully investigated. A homeomorphism
$f\colon  X\to Y$ is called quasiconformal if there is a constant $H<\infty$ such that $H_f(x)\le H$
for all $x\in X$ and $f$ is called quasisymmetric if $H_f(x,r)\le H$ for all $x\in X$ and $r>0$. 
Furthermore, quasisymmetric mappings are shown to be absolutely continuous in measure in
\cite[Corollary 7.13]{HK2}, \cite[Theorem 8.12]{HKST1}. 

 The equivalence of quasiconformal mappings and quasisymmetric mappings between metric spaces
 of locally $Q$-bounded geometry is obtained in \cite[Theorem 9.8]{HKST1}. The absolute continuity
 in measure of quasiconformal mappings in metric spaces with appropriate structure assumptions
 follows as a corollary.

Balogh, Koskela and Rogovin further show that if $X$ supports a $1$-Poincar\'e inequality,
and $h_f<\infty$ on $X\setminus E$
where $E$ is a set of $\sigma$-finite $\mathcal H^{Q-1}$-measure,
and $h_f\le H<\infty$ $\mu$-almost everywhere on $X$, then 
$f\in N^{1,Q}_\loc(X;Y)$ \cite[Theorem 5.1]{BKR}.
A recent result by the authors shows that the condition on $h_f$
can be relaxed by introducing an appropriate weight in the target space $Y$; we state a special
case of Corollary 1.2 of \cite{LaZh1} below.
By a weight we simply mean a nonnegative locally integrable function, and we consider the pointwise
representative
\begin{equation}\label{wY representative}
w_Y(y)=\liminf_{r\to 0}\frac{1}{\nu(B_r(y))}\int_{B_r(y)}w_Y\,d\nu,\quad y\in Y.
\end{equation}

\begin{thm}\label{previouspaper}
Let $(X,d,\mu)$ and $(Y,d_Y,\nu)$ be Ahlfors $Q$-regular spaces with $Q>1$,
and $X$ supports a $1$-Poincar\'e inequality.
In the space $Y$, let $w_Y>0$ be a weight represented by \eqref{wY representative}.
Let $\Omega\subset X$ be a bounded open set, and let $f\colon \Omega\to f(\Omega)$ be
a homeomorphism such that $f(\Omega)$ is open and $\int_{f(\Omega)}w_Y\,d\nu<\infty$.
Assume $E\subset \Omega$ is a set with $\sigma$-finite $(Q-1)$-Hausdorff measure such that $h_f(x)<\infty$
for $x\in \Omega\setminus E$ and $\frac{h_f(\cdot)^Q}{w(f(\cdot))}\in L^\infty(\Omega)$.
Then $f\in D^Q_{\loc}(\Omega;Y).$
\end{thm}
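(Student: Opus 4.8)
The conclusion $f\in D^Q_\loc(\Omega;Y)$ asks for a $Q$-integrable upper gradient of $f$ on balls compactly contained in $\Omega$. The plan is to build such an upper gradient directly from the linear dilatation $h_f$ and the weight $w_Y$: the weighted hypothesis $h_f(\cdot)^Q/w_Y(f(\cdot))\in L^\infty(\Omega)$ will control its size, while the $1$-Poincar\'e inequality together with the smallness of the exceptional set $E$ will furnish the upper gradient inequality, following the scheme of Balogh--Koskela--Rogovin \cite{BKR} and Heinonen--Koskela \cite{HK2}.

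First I would push the weighted target volume back to the domain. Define the Borel measure $\tilde\nu$ on $\Omega$ by $\tilde\nu(A)=\int_{f(A)}w_Y\,d\nu$; since $f$ is a homeomorphism onto the open set $f(\Omega)$, $\tilde\nu$ is finite with $\tilde\nu(\Omega)=\int_{f(\Omega)}w_Y\,d\nu<\infty$. Writing its Radon--Nikodym decomposition against $\mu$ as $\tilde\nu=\theta\,d\mu+\tilde\nu_s$, the differentiation theorem for the doubling measure $\mu$ gives $\limsup_{r\to0}\tilde\nu(B_r(x))/\mu(B_r(x))=\theta(x)<\infty$ for $\mu$-a.e.\ $x$, with $\theta\in L^1(\Omega)$. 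I would then take as candidate
\[
g(x)=C\Big(\frac{h_f(x)^Q\,\theta(x)}{w_Y(f(x))}\Big)^{1/Q},
\]
with $C$ determined by the Ahlfors regularity constants of $X$ and $Y$.

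The integrability of $g$ is the easy half, and is exactly where the weight does its work. Since $h_f(x)^Q/w_Y(f(x))\le M:=\|h_f^Q/w_Y(f)\|_{L^\infty(\Omega)}<\infty$, one has
\[
\int_\Omega g^Q\,d\mu=C^Q\int_\Omega\frac{h_f^Q}{w_Y(f)}\,\theta\,d\mu\le C^QM\int_\Omega\theta\,d\mu\le C^QM\,\tilde\nu(\Omega)=C^QM\int_{f(\Omega)}w_Y\,d\nu<\infty.
\]
Thus the single scalar $h_f^Q\cdot(\text{density of }\tilde\nu)$ is simultaneously pointwise bounded, through the weight, and globally integrable, through the finiteness of the weighted image volume; this trade-off is the entire point of introducing $w_Y$.

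The hard part, which I expect to be the main obstacle, is to verify that $g$ is a $Q$-weak upper gradient of $f$. The difficulty is the gap between $h_f(x)=\liminf_{r\to0}H_f(x,r)$, which only yields control at a sequence of good radii, and an upper gradient, which must dominate the oscillation of $f$ along essentially every curve. The local estimate is obtained as follows: for $r$ small with $H_f(x,r)$ near $h_f(x)$, the inclusion $B_{l_f(x,r)}(f(x))\subset f(B_r(x))$ together with Ahlfors $Q$-regularity of $Y$ and the representative \eqref{wY representative} of $w_Y$ at $f(x)$ gives $l_f(x,r)^Q\le C\,w_Y(f(x))^{-1}\tilde\nu(B_r(x))$, whence
\[
\diam f(B_r(x))^Q\le C\,H_f(x,r)^Q\,l_f(x,r)^Q\le C\,\frac{h_f(x)^Q}{w_Y(f(x))}\,\tilde\nu(B_r(x)),
\]
so that $\diam f(B_r(x))\lesssim g(x)\,r$ at such radii. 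Because $E$ has $\sigma$-finite $\mathcal H^{Q-1}$-measure, the family of curves meeting $E$ in positive length has zero $Q$-modulus, so $Q$-a.e.\ curve enjoys the bound $h_f<\infty$ at $\mathcal H^1$-a.e.\ of its points; the $1$-Poincar\'e inequality then lets me assemble these good-radius ball estimates into the curve inequality $d_Y(f(\gamma(0)),f(\gamma(\ell_\gamma)))\le\int_\gamma g\,ds$ for $Q$-a.e.\ $\gamma$, via a chaining and Vitali covering argument along each such curve exactly as in \cite[Theorem 5.1]{BKR}. The coordination of the good radii for $H_f$ with those for the weight representative, and the passage from liminf to a genuine upper gradient, are the delicate points; the only change from the unweighted argument is the systematic replacement of the image volume $\nu(f(B_r(x)))$ by the pullback mass $\tilde\nu(B_r(x))$. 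Having produced $g\in L^Q(\Omega)$ that is a $Q$-weak upper gradient of $f$, we conclude $f\in D^Q_\loc(\Omega;Y)$.
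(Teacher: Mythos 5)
The paper does not prove this statement internally---it is imported verbatim as a special case of \cite[Corollary 1.2]{LaZh1}---and your sketch reconstructs essentially the argument of that cited source: pull the weighted target volume back to $\tilde\nu(A)=\int_{f(A)}w_Y\,d\nu$, use the representative \eqref{wY representative} together with Ahlfors regularity of $Y$ to get $\bigl(\diam f(B_r(x))\bigr)^Q\lesssim \frac{h_f(x)^Q}{w_Y(f(x))}\,\tilde\nu(B_r(x))$ at good radii (note the liminf in \eqref{wY representative} gives the lower density bound at \emph{every} point and \emph{all} sufficiently small radii, so the coordination you worry about is only needed on the $H_f$ side), and then run the Balogh--Koskela--Rogovin chaining, with the $L^\infty$ hypothesis on $h_f^Q/w_Y(f(\cdot))$ and $\tilde\nu(\Omega)<\infty$ supplying the uniform $L^Q$ bound exactly as in your integrability computation. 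One caveat on precision: your pointwise candidate $g=C\bigl(h_f^Q\,\theta/w_Y(f)\bigr)^{1/Q}$ built from the Radon--Nikodym density $\theta$ is not quite what the chaining delivers, since the curve estimates involve the ball masses $\tilde\nu(B_i)$ themselves, singular part included, and the replacement $\tilde\nu(B_i)\le(\theta(x_i)+\vep)\mu(B_i)$ holds only at differentiation points with radii small depending on the point; the rigorous route, as in \cite[Theorem 5.1]{BKR} and \cite{LaZh1}, works instead with scale-$r$ maximal functions of the measure $\tilde\nu$ and extracts the weak upper gradient as a weak $L^Q$ limit, which is what makes the ``passage from liminf to a genuine upper gradient'' that you correctly flag go through.
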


\begin{proof}[{Proof of Corollary \ref{Lusin}}]
	From Theorem \ref{previouspaper} we obtain
	$f\in D^{Q}(\Omega;Y)$.
Now the result follows from Theorem \ref{main}.
\end{proof}

\begin{remark}
Note that in Corollary \ref{Lusin} we are able, perhaps unexpectedly, to think of $Y$ as a
weighted space in order to show that $f\in AC^Q(\Omega;Y)$, even though $Q$-absolute continuity
is defined by essentially thinking of $Y$ as an unweighted space.

Another observation is that we obtain $Q$-absolute continuity, which is defined by considering
finite collections of arbitrary disjoint balls, from a condition on the quantity $h_f$ that only gives
pointwise, asymptotic control of $f$.
\end{remark}

A simple demonstration of applying the above corollary is given in the following example,
which uses a similar construction as \cite[Example 6.3]{LaZh1}.

\begin{example}\label{ex:plane}
	Consider the  square $\Omega:=(-1,1)\times (-1,1)$ on the unweighted plane
	$X=Y=\R^2$.
	Define the homeomorphism $f\colon \Omega\to f(\Omega)=(-1,1)\times (-2/3,2/3)$:
	\[
	f(x_1,x_2):=
	\begin{cases}
	(x_1,\frac 23 x_2^{3/2}),\quad x_2\ge 0,\\
	(x_1,-\frac 23 |x_2|^{3/2}),\quad x_2\le 0.
	\end{cases}
	\]
	We have $|Df|=\sqrt{1+|x_2|}$.
	Thus  $|Df|\in L^2(\Omega)$ and then clearly $f\in N^{1,2}(\Omega;f(\Omega))$.
	It is also easy to verify that $f$ is pseudomonotone,
	since $f$ is monotone in both $x_1$ and $x_2$.
	From the classical area formula, we can obtain directly that $f$ is in $AC^2(\Omega;f(\Omega))$.
	
	Overall, for such a regular mapping $f$, all of these properties are easy to verify directly,
	but it is also interesting to check what we can obtain from considering the quantity $h_f$.
	
	We easily find that $h_f(x_1,x_2)=|x_2|^{-1/2}$.
	Now $h_f\notin L^{\infty}(\Omega)$, and so previous results on quasiconformal mappings, such
	as those of \cite{BKR} or \cite{Wi}, do not give $f\in N^{1,2}(\Omega;f(\Omega))$, let alone
	$f\in AC^2(\Omega;f(\Omega))$.
	However, by considering the weight $w_Y(y_1,y_2)=|y_2|^{-2/3}$,
	we get $\frac{h_f(\cdot)^2}{w_Y(f(\cdot))}\in L^\infty(\Omega)$.
	We choose $E=(-1,1)\times \{0\}$.
	Now Corollary \ref{Lusin} gives $f\in AC^{2}(\Omega;f(\Omega))$.
\end{example}


\end{document}